\newcommand{\C}{\mathbb{C}}
\newcommand{\la}{\lambda}
\newcommand{\g}{\gamma}
\newcommand{\Cg}{\mathcal{C}_\gamma}
\renewcommand{\le}{\leqslant}
\renewcommand{\ge}{\geqslant}
\renewcommand{\leq}{\leqslant}
\renewcommand{\geq}{\geqslant}
\newcommand{\wh}{\widehat}
\newcommand{\wt}{\widetilde}
\newcommand{\cs}{\mathcal{V}}
\newcommand{\lcs}{\mathcal{U}}
\newcommand{\ncs}{\bar{\mathcal{V}}}
\newcommand{\sss}{\mathcal{W}}
\DeclareMathOperator{\diag}{diag}
\DeclareMathOperator{\sspan}{span}
\DeclareMathOperator{\gap}{gap}
\DeclareMathOperator{\sep}{sep}
\DeclareMathOperator{\relsep}{relsep}
\DeclareMathOperator{\dist}{dist}
\newcommand{\vett}[2]{\begin{bmatrix}#1\\ #2\end{bmatrix}}
\newcommand{\rowvett}[2]{\begin{bmatrix}#1 & #2\end{bmatrix}}
\newcommand{\twotwo}[4]{\left[\begin{array}{cc} #1 & #2\\ #3 & #4 \end{array}\right]}
\newcommand{\norm}[1]{\left\Vert#1\right\Vert}
\newcommand{\abs}[1]{\left\vert#1\right\vert}
\newtheorem{theorem}{Theorem}
\newtheorem{lemma}[theorem]{Lemma}
\newtheorem{Test}{Test}
\newenvironment{proof}{\textit{Proof. }}{\hfill $\square$ \bigskip}
\renewcommand{\H}{\ensuremath{\mathcal{H}}}
\begin{document}


\title{A Subspace Shift Technique for Nonsymmetric Algebraic Riccati Equations}

\author{Bruno Iannazzo\footnote{Dipartimento di Matematica e Informatica. Via Vanvitelli 1, 06123 Perugia, Italy \href{mailto:bruno.iannazzo@dmi.unipg.it}{\texttt{bruno.iannazzo@dmi.unipg.it}}. The work of the first author was partly supported by PRIN 2008 N. 20083KLJEZ and by GNCS of Istituto Nazionale di Alta Matematica.} \and Federico Poloni\footnote{Technische Universit\"at Berlin, Strasse des 17 Juni 137, 10623 Berlin \href{mailto:poloni@math.tu-berlin.de}{\texttt{poloni@math.tu-berlin.de}}. The work of the second author was supported by a postdoctoral grant of the A.~von Humboldt Foundation since May 2011.}}

\maketitle

\begin{abstract}
The worst situation in computing the minimal nonnegative solution
of a nonsymmetric algebraic Riccati equation 
associated with an M-matrix occurs when the corresponding linearizing matrix has two very small eigenvalues, one with positive and
one with negative real part. When both these eigenvalues
are exactly zero, the problem is called critical or null recurrent. While in this case the problem is ill-conditioned
and the convergence of the algorithms based on matrix iterations is slow,
there exist some techniques to
remove the singularity and transform the problem to a well-behaved one.
Ill-conditioning and slow convergence appear also in
close-to-critical problems, but when none of the eigenvalues is exactly zero
the techniques used for the critical case cannot be applied.

In this paper, we introduce a new method to accelerate the convergence properties of the
iterations also in close-to-critical cases, by working on the
invariant subspace associated with the problematic eigenvalues as a whole.
We present a theoretical analysis and several
numerical experiments which confirm the efficiency of the new method.
\end{abstract}


\section{Introduction}\label{sec:intro}

We consider the nonsymmetric algebraic Riccati equation (or
NARE) 
\begin{equation}\label{eq:NARE}
   XCX-AX-XD+B=0,
\end{equation}
where $X,B\in\mathbb C^{m\times n}$, $A\in\mathbb C^{m\times m}$,
$C\in\mathbb C^{n\times m}$, $D\in\mathbb C^{n\times n}$. We write equation \eqref{eq:NARE} briefly as $\mathcal R(X)=0$ where $\mathcal R(X)=XCX-AX-XD+B$.

In certain applications in queueing models \cite{rogers} and in
the numerical solution of transport equations \cite{juanglin}, the
coefficients of \eqref{eq:NARE} are such that
\[
 \mathcal{M}=\begin{bmatrix}
    D & -C\\
    -B & A
   \end{bmatrix}
\]
is an M-matrix, either nonsingular or singular irreducible. In
this case, we give Equation \eqref{eq:NARE} the acronym M-NARE. We
recall that $M\in\C^{n\times n}$ is an M-matrix if it can be
written in the form $M=sI_n-N$, where $I_n$ is the identity matrix
of size $n$ (denoted also by $I$ if there is no ambiguity), $N$ is
a matrix whose elements are nonnegative, for which we use the
notation $N\ge 0$, and $s\ge \rho(N)$, where $\rho(\cdot)$ is the
spectral radius of a square matrix. The M-matrix $M$ is singular
if $s=\rho(N)$ and nonsingular if $s>\rho(N)$. It can be proved
that the eigenvalues of an M-matrix have nonnegative real part
\cite{berple}.

The solutions of the NARE \eqref{eq:NARE} can be put in
correspondence with certain $n$-dimensional invariant subspaces of
the matrix
\begin{equation}\label{defH}
 \mathcal{H}=\begin{bmatrix}
    D & -C\\
    B & -A
   \end{bmatrix}.
\end{equation}
More precisely, a matrix $X\in \C^{m\times n}$ is a solution of
\eqref{eq:NARE} if and only if the columns of $\begin{bmatrix}
  I_n \\ X
 \end{bmatrix}$
span an invariant subspace of $\mathcal{H}$. In particular, it holds
that
\begin{equation}\label{eq:HIX}
\mathcal{H}\begin{bmatrix}
  I_n \\ X
 \end{bmatrix}=\begin{bmatrix}
  I_n \\ X
 \end{bmatrix}(D-CX),
\end{equation}
and the eigenvalues of $D-CX$ are a subset of the eigenvalues
of~$\mathcal{H}$. 

We say that the NARE \eqref{eq:NARE} is {\em associated with} the
matrix $\mathcal{H}$ of \eqref{defH} or that $\H$ is the {\em linearizing} matrix of the NARE. Observe that any $2\times 2$
block matrix with square diagonal blocks yields a NARE associated
with it.

In the case of an M-NARE it can be proved
that the eigenvalues of
$\mathcal{H}$ can be ordered by non increasing real part such that
\begin{equation}\label{eq:ordeig}
    \Re\la_1\geq \cdots\geq \Re\la_{n-1}\geq \la_n\geq 0\geq \la_{n+1}\geq \cdots\geq
    \Re\la_{m+n},
\end{equation}
that is, $n$ eigenvalues belong to the closed right half complex
plane and the other eigenvalues to the closed left half plane, and
the eigenvalues
$\la_n$ and $\la_{n+1}$ are real. If moreover $\mathcal M$ is irreducible, then $\Re\la_{n-1}>\la_n\geq 0\geq \la_{n+1}>\Re \la_{n+2}$.
If $\la_n=0=\la_{n+1}$,
then the eigenvalue zero is
associated to a size-2 Jordan block  (see
\cite{bimp} and the references therein).

All these spectral properties implies that the matrix $\mathcal{H}$ associated with an
M-NARE has a unique $n$-dimensional invariant subspace
corresponding to the $n$ {\em rightmost} eigenvalues, namely
$\la_1,\ldots,\la_n$, which we call the $n$-dimensional {\em
antistable invariant subspace} of $\mathcal{H}$ (the term comes
from the theory of the symmetric algebraic Riccati equations in
dynamical systems \cite{lr}). On the other hand, the matrix
$\mathcal{H}$ has a unique $m$-dimensional invariant subspace
corresponding to the $m$ {\em leftmost} eigenvalues, namely
$\la_{n+1},\ldots,\la_{n+m}$, which we call {\em stable invariant
subspace}.

In the applications, the required solution of the M-NARE is the
one for which the columns of $\begin{bmatrix} I_n\\X
\end{bmatrix}$ span the $n$-dimensional antistable
invariant subspace of $\mathcal{H}$, or, equivalently, such that
the eigenvalues of $D-CX$ are the $n$ rightmost eigenvalues of
$\mathcal{H}$. This solution has been proved to exist and it turns
out to be the minimal element-wise nonnegative solution of
\eqref{eq:NARE} (see \cite{guo01}).

Equation \eqref{eq:NARE} is usually solved either by some matrix
iteration, e.g., the Cyclic Reduction (CR) \cite{bm96} or the
Structured Doubling Algorithm (SDA) \cite{cfl05,gim},
whose limits yield the required solution or using the ordered
Schur form of $\mathcal{H}$ \cite{guo06}.

Both the conditioning of the equation and the convergence speed of most iterations are strictly related to some property of good separation between the stable and anti-stable subspace; for this purpose, several different measures of ``nearness'' are used in literature; in Section~\ref{sec:gapsep} we introduce and discuss them. Nevertheless, all approaches identify two important cases:
\begin{enumerate}
\item $\la_n=\la_{n+1}=0$; in this case, the minimal nonnegative solution of~\eqref{eq:NARE} is ill-conditioned~\cite{guohigham} and the convergence of most iterations degrades from quadratic to linear~\cite{seisamurai}. This case is known as \emph{critical case}.
Most of this problems can be circumvented by using the so-called \emph{shift technique}~\cite{gim,hmr01}. It consists in making a special rank-one correction of $\mathcal{H}$, obtaining a new Riccati equation with the same minimal solution. The new equation has better conditioning and the convergence of iterations is quadratic again. The shift technique works not only in the critical case, but also when only one of $\la_n$ and $\la_{n+1}$ is zero, yielding minor benefits in this case.
\item equations that are (in some sense) ``close'' to having $\la_n=\la_{n+1}=0$, while neither of the two eigenvalues is exactly zero. This case is known as \emph{close to critical}. It is effectively the worst-case scenario, since the same difficulties as in the critical case appear (ill-conditioning, slow convergence of the numerical methods based on matrix iterations), but the shift technique cannot be applied as it requires an eigenvalue to be exactly zero. 
\end{enumerate}
The difficulties in the latter case are the main motivation for this work. We present a new technique, which we call \emph{subspace shift}, that aims to extend the shift technique to this case. A necessary assumption is that we can identify a small subset of the eigenvalues which are ``responsible'' for the ill-conditioning of the equation and well separated from the rest of the spectrum; we define this property more precisely in the following. We call the associated subspace \emph{central subspace}. The dimension $k$ of this subspace may be known \emph{a priori} from the theoretical properties of the problem (as for instance in~\cite{juanglin}), or determined at run-time, which is a more difficult task.

The technique consists in building a rank-$k$ modification of the matrix that alters the eigenvalues associated with the central subspace, but does not modify the invariant subspaces and the minimal solution. In this way, we reduce the problem to a ``far from critical'' one, for which the solution can be computed with a faster and stabler iterative method.

The paper is organized as follows. In Section~\ref{sec:gapsep}, we describe and compare the different notions of distance from criticality which exist in literature and how they affect the conditioning of the problem and the convergence speed of the numerical algorithms. In Section~\ref{sec:ds} we introduce the subspace shift technique and outline some results that give an insight of its behavior in terms of the different criticality metrics. In Section~\ref{sec:alg} we describe its implementation in more detail and discuss the computational aspects. Section~\ref{sec:numa} contains some experimental results that show the effectiveness of this technique. Finally, Section \ref{sec:conc} draws the conclusions.

In the following, $\sigma(M)$ stands for the set of the
eigenvalues of $M\in \C^{n\times n}$, and $\norm{\cdot}_F$ denotes
the Frobenius norm. We define the \emph{Cayley transform} of
parameter $\gamma\in \mathbb R\setminus\{0\}$ as the map
\[
 \Cg: z \mapsto \frac{z-\gamma}{z+\gamma}.
\]
Notice that, for $\gamma>0$, $\Cg$ maps the open (closed) right
half-plane onto the open (closed) unit circle, and the open
(closed) left half-plane onto the exterior of the open (closed)
unit circle.


\section{Measures of criticality: $\gap$ and $\sep$}\label{sec:gapsep}
\subsection{The gap between eigenvalues}
The simplest measure of criticality, adopted in most works on the shift \cite{hmr01,gim} is the so-called \emph{gap}, i.e., the distance between the two eigenvalues closer to the imaginary axis
$
 \gap(\mathcal{H}):=\abs{\la_{n}-\la_{n+1}}.
$
In the critical case $\gap(\mathcal{H})=0$, and a problem is called close-to-critical when $\gap(\mathcal{H})$ is small with respect to $\norm{\H}$. A strictly related quantity, which appears explicitly in the expressions for the convergence speeds of SDA and Cyclic Reduction \cite{bimp}, is its Cayley-transformed version
\begin{equation}\label{convspeed}
 \gap_{\Cg}(\mathcal{H}):=\frac{\max_{i=1,\dots,n} \abs{\Cg(\lambda_i)}}{\min_{j=1,\dots,m} \abs{\Cg(\lambda_{n+j})}},
\end{equation}
where $\gamma$ is chosen according to
\begin{equation}\label{eq:gamma1}
\gamma\ge \gamma_*=\max\left\{\max_{1\le i\le m}a_{ii}, \max_{1\le
i\le n}d_{ii}\right\}.
\end{equation}
We have $\gap_{\Cg}(\mathcal{H})\leq 1$, with equality in the critical case. Since the Cayley transform alters the position of the eigenvalues, it is not apparent that the minimum and maximum in \eqref{convspeed} are attained in $\la_{n}$ and $\la_{n+1}$; we give here a proof of this result. Let us first
assess the following technical lemma.
\begin{lemma}\label{lemma}
Let $\Gamma$ be a closed disc in the complex plane with center
$C\in\mathbb R$ and radius $r$. The point in $\Gamma$ with maximal
modulus is one among $C+r$ and $C-r$.
\end{lemma}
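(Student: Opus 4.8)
The plan is to reduce the statement to the triangle inequality. Write the disc as $\Gamma=\{z\in\mathbb C:\abs{z-C}\le r\}$. For any $z\in\Gamma$ I would estimate
\[
 \abs{z}\le \abs{C}+\abs{z-C}\le \abs{C}+r,
\]
so $\abs{C}+r$ is an upper bound for the modulus on $\Gamma$. The one place where the hypothesis is used is the observation that, since $C$ is \emph{real}, one has $\abs{C}+r=\max\{\abs{C+r},\abs{C-r}\}$: indeed $C+r$ and $C-r$ are real numbers, and the one whose sign agrees with that of $C$ (either one, if $C=0$) has absolute value exactly $\abs{C}+r$.

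It then remains to check that this bound is attained in $\Gamma$. Both $C+r$ and $C-r$ belong to $\Gamma$, being at distance exactly $r$ from the center, and by the previous remark (at least) one of them has modulus $\abs{C}+r$. Hence $\max_{z\in\Gamma}\abs{z}=\abs{C}+r$ and the maximum is attained at $C+r$ or at $C-r$, which is the claim.

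If one prefers an argument that makes the geometry explicit, one can instead note that $z\mapsto\abs{z}$ is convex, so its maximum over the convex compact set $\Gamma$ is attained on the bounding circle; writing $z=C+re^{i\theta}$ gives $\abs{z}^2=C^2+2Cr\cos\theta+r^2$, which is monotone in $\cos\theta$ and thus maximal at $\theta=0$ or $\theta=\pi$, i.e.\ at $C\pm r$. I do not expect any real obstacle here: the only subtlety is that the reality of $C$ is exactly what forces the two extreme candidates $C\pm r$ to lie on the line through the origin and the center, which is what makes one of them a global maximizer.
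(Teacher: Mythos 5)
Your proposal is correct and uses essentially the same argument as the paper: the triangle inequality gives the upper bound $\abs{C}+r$, and the reality of $C$ ensures this bound is attained at one of $C\pm r$. Your write-up is, if anything, slightly more explicit than the paper's about why the bound is actually achieved.
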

\begin{proof}
Let $C+p\in \mathbb C$, $\abs{p}\leq r$, be a generic point in the
disc. By the triangle inequality, $\abs{C+p}\leq \abs{C}+\abs{p}
\leq \abs{C}+r$, with equality if and only if $\abs{p}=r$ and $p$
has the same argument as $C$, i.e., either real positive or
negative.
\end{proof}
\begin{theorem}
Let $\mathcal{H}$ be associated with an M-NARE, and $\gamma$ be
chosen according to \eqref{eq:gamma1}. The minimum and the maximum
in \eqref{convspeed} are attained by $i=n$ and $j=1$, i.e., we may replace \eqref{convspeed}
with
\[
 \gap_{\Cg}(\mathcal{H}):=\frac{\abs{\Cg(\lambda_n)}}{\abs{\Cg(\lambda_{n+1})}}.
\]
\end{theorem}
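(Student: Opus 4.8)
The plan is to exploit the structure of the Cayley transform together with the ordering~\eqref{eq:ordeig} of the eigenvalues. The key observation is that for a fixed $\gamma>0$, the map $z\mapsto\abs{\Cg(z)}$ is not monotone in $\Re z$ alone (it depends on the full complex argument), so we cannot argue directly from~\eqref{eq:ordeig}. Instead, I would fix one of the closed half-planes, say the closed right half-plane $\{\Re z\ge 0\}$ containing $\la_1,\dots,\la_n$, and describe the level sets $\{z:\abs{\Cg(z)}=\rho\}$ for $\rho\in(0,1]$. Since $\Cg$ is a M\"obius transformation sending the imaginary axis to the unit circle and $\gamma$ to $0$, these level sets are circles; a short computation (Apollonius circles for the two points $\gamma$ and $-\gamma$) shows that the set $\{z:\abs{\Cg(z)}\le\rho\}$ is a closed disc with real center and real-axis endpoints.

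The main step is then: among the eigenvalues $\la_1,\dots,\la_n$ in the closed right half-plane, the one maximizing $\abs{\Cg(\cdot)}$ is the one closest to the imaginary axis, namely $\la_n$. To see this, take $\rho^\star=\max_{i=1,\dots,n}\abs{\Cg(\la_i)}$ and let $\Gamma$ be the closed disc $\{z:\abs{\Cg(z)}\le\rho^\star\}$, which by the previous paragraph has real center $C$ and radius $r$ with real endpoints $C\pm r$. All of $\la_1,\dots,\la_n$ lie in $\Gamma$, and at least one lies on its boundary. By Lemma~\ref{lemma} applied to $\Gamma$, the point of $\Gamma$ of maximal modulus is $C+r$ or $C-r$; but more to the point, I want to locate which eigenvalue lies on the boundary. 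Here is where the sign condition~\eqref{eq:gamma1} on $\gamma$ enters: the choice $\gamma\ge\gamma_\ast\ge\max_i d_{ii}$ (and $\ge\max_i a_{ii}$) forces the relevant disc to be positioned so that, within the right half-plane, $\abs{\Cg(z)}$ increases as $\Re z$ decreases toward the imaginary axis. Concretely, since $\Re\la_n\ge 0$ and $\la_n$ is real while the other $\la_i$ with $\Re\la_i\ge\Re\la_n$ sit further from the unit circle's exterior boundary, one shows $\la_n$ is the boundary point, i.e.\ $\abs{\Cg(\la_n)}=\rho^\star$. A symmetric argument with $1/\abs{\Cg(\cdot)}$ on the closed left half-plane, using $-\la_{n+1}\le 0$ and the other half of~\eqref{eq:gamma1}, gives $\abs{\Cg(\la_{n+1})}=\min_{j=1,\dots,m}\abs{\Cg(\la_{n+j})}$.

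The cleanest way to package the disc computation is probably to note that $\abs{\Cg(z)}\le\rho$ iff $\abs{z-\gamma}^2\le\rho^2\abs{z+\gamma}^2$, which rearranges to $(1-\rho^2)\abs{z}^2-2\gamma(1+\rho^2)\Re z+(1-\rho^2)\gamma^2\le 0$; for $\rho<1$ this is exactly a closed disc with center on the real axis at $\gamma\frac{1+\rho^2}{1-\rho^2}$ and the stated real endpoints, and for $\rho=1$ it is the closed left half-plane. Then the bound $\abs{\Cg(\la_i)}\le\abs{\Cg(\la_n)}$ for $\Re\la_i\ge\Re\la_n\ge 0$ follows because on the closed right half-plane, for fixed imaginary part the quantity $\abs{\Cg(z)}$ is decreasing in $\Re z$ (immediate from the displayed quadratic, since the coefficient of $\Re z$ is negative), and one still has to handle differing imaginary parts — that is the technical heart, handled by Lemma~\ref{lemma} as above once we know all the $\la_i$ sit in the extremal disc.

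I expect the main obstacle to be precisely the interplay between real and imaginary parts: the ordering~\eqref{eq:ordeig} only controls real parts, so ruling out that some $\la_i$ with large imaginary part achieves a larger $\abs{\Cg(\cdot)}$ than $\la_n$ requires the Apollonius-disc description plus Lemma~\ref{lemma}, and one must be careful that the condition $\gamma\ge\gamma_\ast$ is actually what guarantees $C-r$ (rather than $C+r$) is irrelevant and that $\la_n$, being real and closest to the axis among the right-half-plane eigenvalues, is the extremal point. Verifying the edge case $\rho=1$ (the critical case, where $\la_n=\la_{n+1}=0$ and $\gap_{\Cg}(\H)=1$) should be a quick sanity check rather than a real difficulty.
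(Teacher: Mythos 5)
There is a genuine gap at what you yourself call ``the technical heart'': controlling eigenvalues with large imaginary parts. Your argument uses only the half-plane location and the real-part ordering \eqref{eq:ordeig} of $\la_1,\dots,\la_n$, plus the geometry of the Cayley transform, and that information is not sufficient to conclude that $\la_n$ maximizes $\abs{\Cg(\cdot)}$. Concretely, take $\gamma=10$, $\la_n=1$ and a hypothetical eigenvalue $\mu=2+100i$ with $\Re\mu>\Re\la_n\ge 0$: then $\abs{\Cg(\la_n)}=9/11\approx 0.82$ while $\abs{\Cg(\mu)}\approx 0.996$. This is consistent with everything you assume, so the step ``$\la_n$ is the boundary point'' cannot follow from your hypotheses. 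The two moves you propose to close this do not work: (i) the claim that $\abs{\Cg(z)}$ is decreasing in $\Re z$ for fixed $\Im z$ is false once $\Re z>\sqrt{\gamma^2+(\Im z)^2}$ (the $\abs{z}^2$ term in your displayed quadratic also depends on $\Re z$), and in any case monotonicity at fixed imaginary part says nothing about eigenvalues with different imaginary parts; (ii) applying Lemma~\ref{lemma} to the extremal Apollonius disc $\{z:\abs{\Cg(z)}\le\rho^\star\}$ identifies the point of that disc of largest \emph{modulus}, not the eigenvalue attaining $\rho^\star$ --- every boundary point of that disc has the same value of $\abs{\Cg}$, and defining it via $\rho^\star=\max_i\abs{\Cg(\la_i)}$ makes the containment of the $\la_i$ tautological rather than informative.

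The missing ingredient is the M-matrix/Perron--Frobenius structure, which is how the paper proceeds. Since $\gamma\ge\max_i d_{ii}$ and $CX_*\ge 0$, the matrix $P=\gamma I-D+CX_*$ is entrywise nonnegative, so $D-CX_*=\gamma I-P$ and all of $\la_1,\dots,\la_n$ lie in the closed disc of center $\gamma$ and radius $\rho(P)=\gamma-\la_n$, with $\la_n$ on its boundary. This containment bounds the imaginary parts in exactly the way your argument lacks (it excludes the configuration above). One then maps \emph{this} disc, not a level set of $\abs{\Cg}$, through the Cayley transform: the image is again a disc, symmetric about the real axis, meeting the real axis at $\Cg(\la_n)$ and $\Cg(2\gamma-\la_n)$, and Lemma~\ref{lemma} --- now applied in the image, where ``maximal modulus'' is the quantity actually wanted --- plus a short computation using $\la_n\le\gamma$ shows that $\Cg(\la_n)$ is the maximizer. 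The analogous argument for $A-X_*C$ gives the minimum. Your overall scaffolding (enclose the spectrum in a disc with real center, push it through $\Cg$, invoke Lemma~\ref{lemma}) is the right one, but the enclosing disc must come from the nonnegativity of $\gamma I-(D-CX_*)$, not from the sublevel sets of $\abs{\Cg}$.
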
\label{thm:central}
\begin{proof} From
\eqref{eq:gamma1} we have $\gamma I-D\geq 0$ and thus $P=\gamma I
- D +CX_\ast \geq 0$. Hence we may write $D-CX_\ast=\gamma I - P$;
from the Perron--Frobenius theory of M-matrices, it follows that
all the eigenvalues of $D-CX_\ast$ are contained in the closed
disc with center $\gamma$ and radius $r=\gamma-\lambda_n$; and in
particular, the eigenvalue $\lambda_n$ lies on its boundary. We
call this disc $\Gamma$, and proceed to prove that
$\Cg(\lambda_n)$ has the maximal modulus among all points in in
$\Cg(\Gamma)$. The image of $\Gamma$ under the Cayley transform is
a closed disc $\Gamma'$, which must be contained in the unit disc
and symmetric with respect to the real axis. This means that its
center (which is not in general $\Cg(\gamma)$) is real. This disc
$\Gamma'$ intersects the real axis in the two points
$\Cg(\lambda_n)$ and $\Cg(2\gamma-\lambda_n)$. By the lemma, the
point of maximal modulus in $\Gamma'$ is one among them; direct
computation (using $\lambda_n\leq \gamma$) shows that it is the
former.

A similar reasoning starting from $A-X_\ast C$ yields that
$\min_{j=1,\dots,m} \abs{\Cg(\lambda_{n+j})}$ is achieved by
$j=1$; we need some extra care with the signs, as
$\lambda_{n+1}\leq 0$, and with the fact that this time the image
of the enclosing disc under the Cayley transform is the
\emph{outside} of a suitable disc.
\end{proof}

Therefore an explicit relation among the two concepts of gap can be established. Observe that while the gap changes by scaling the matrix $\H$ by a real parameter $\alpha\ne 0$, the Cayley-transformed gap of $\alpha \H$ is the same as the one of $\H$, if the same value of $\gamma$ is chosen according to \eqref{eq:gamma1}. Thus, the Cayley transformed gap can be seen as a relative inverse gap. 


\subsection{The subspace separation}\label{sec:subsep}
A third, more accurate notion of nearness is given by the \emph{subspace separation} \cite{golubvanloan,stewartsun}. We first define the \emph{separation} between the two square matrices $M$ and $N$ as
\begin{equation}\label{eq:defsep}
 \sep(M,N):=\min_{X\neq 0} \frac{\norm{MX-XN}}{\norm{X}},
\end{equation}
where $\norm{\cdot}$ is a suitable matrix norm (for instance we denote by $\sep_F$ and $\sep_2$ the separation in the Frobenius and spectral norm, respectively), and recall the bound
\begin{equation}\label{gapsep}
 \sep(M,N) \leq \min_{\mu \in \sigma(M),\nu \in
 \sigma(N)}\abs{\mu-\nu}.
\end{equation}
Given an invariant subspace $\mathcal{W}$ for a matrix $A\in\mathbb{C}^{(n+m)\times(n+m)}$, let
$Q$
be an unitary matrix such that
\begin{align}\label{defA11A12}
 Q^* A Q =&
 \twotwo{A_{11}}{A_{12}}{0}{A_{22}},&  A_{11}\in& \C^{n\times n}, & A_{22}\in& \C^{m\times m},
\end{align}
and the first $n$ columns of $Q$ span $\mathcal{W}$. We define $\sep(\mathcal{W}):=\sep(A_{11},A_{22})$
and $\relsep(\mathcal{W}):=\frac{\sep(\mathcal{W})}{\norm{A}}$.

When $A=\mathcal{H}$ and $\mathcal{W}$ is the anti-stable space,
$\relsep(\mathcal{W})$ gives a third measure of the distance of $\mathcal{H}$ from the critical case. The conditioning of the Riccati equation depends essentially on this separation measure, as shown by the following results. Let the
\emph{distance} between two subspaces be defined as
$\dist(\mathcal{U},\mathcal{V}):=\norm{P_{\mathcal{U}}-P_{\mathcal{V}}}$,
with $P_{\mathcal{W}}$ the orthogonal projection on the image of
$\mathcal{W}$.
\begin{theorem}[\cite{stewartsun}]
Let $\mathcal{W}$ be an invariant subspace of a matrix
$\mathcal{H}$ and $\widetilde{\mathcal{W}}$ be an invariant
subspace of $\widetilde{\mathcal{H}}=\mathcal{H}+E$, where
$\norm{E}\leq \varepsilon \norm{\mathcal{H}}$. Then, for all sufficiently small $\varepsilon$ we have
\begin{equation}\label{relsepbound}
 \dist(\mathcal{W},\widetilde{\mathcal{W}})\leq C\frac{\varepsilon}{\relsep{\mathcal{W}}},
\end{equation}
for a suitable constant $C$ of moderate size.
\end{theorem}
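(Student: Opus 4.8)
The plan is to reduce everything to the block-triangular coordinates of \eqref{defA11A12} and to look for $\widetilde{\mathcal W}$ as a graph over $\mathcal W$. Concretely, I would take the unitary $Q$ with $Q^*\mathcal H Q=\twotwo{A_{11}}{A_{12}}{0}{A_{22}}$ whose first $n$ columns span $\mathcal W$, and write $Q^*EQ=\twotwo{E_{11}}{E_{12}}{E_{21}}{E_{22}}$, so that $\widetilde{\mathcal H}$ reads $\twotwo{A_{11}+E_{11}}{A_{12}+E_{12}}{E_{21}}{A_{22}+E_{22}}$ in these coordinates. For $\varepsilon$ small the relevant invariant subspace of $\widetilde{\mathcal H}$ is the one close to $\sspan\vett{I_n}{0}$, hence of the form $\sspan\,Q\vett{I_n}{P}$ for some small $P\in\C^{m\times n}$. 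Imposing invariance, $Q^*\widetilde{\mathcal H}Q\vett{I_n}{P}=\vett{I_n}{P}S$ for some $S$; the top block forces $S=(A_{11}+E_{11})+(A_{12}+E_{12})P$, and substituting into the bottom block yields the algebraic Riccati equation
\[
 A_{22}P-PA_{11}=-E_{21}+PE_{11}-E_{22}P+P(A_{12}+E_{12})P.
\]
So far this is just bookkeeping.

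Next I would turn this into a fixed-point equation. Let $\mathbf T$ be the Sylvester operator $P\mapsto A_{22}P-PA_{11}$. Since $\mathcal W$ is nondegenerate ($\relsep(\mathcal W)$ sits in the denominator of \eqref{relsepbound}, so $\sep(\mathcal W)>0$ and $\sigma(A_{11})\cap\sigma(A_{22})=\emptyset$), $\mathbf T$ is invertible with $\norm{\mathbf T^{-1}}=1/\sep(\mathcal W)$ by \eqref{eq:defsep} (up to the harmless reordering of the two arguments there). The Riccati equation becomes $P=\mathbf T^{-1}\Phi(P)$ with $\Phi(P)=-E_{21}+PE_{11}-E_{22}P+P(A_{12}+E_{12})P$, and I would run a Banach fixed-point argument on the ball $\{\norm P\le\rho\}$ with $\rho$ of order $\varepsilon\norm{\mathcal H}/\sep(\mathcal W)$. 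Using $\norm{E_{ij}}\le\varepsilon\norm{\mathcal H}$ and $\norm{A_{ij}}\le\norm{\mathcal H}$, one checks that $\mathbf T^{-1}\Phi$ maps this ball into itself and is a contraction there, \emph{provided} $\varepsilon$ is small enough that $\norm{E_{11}}$, $\norm{E_{22}}$ and $(\norm{A_{12}}+\norm{E_{12}})\rho$ are all small relative to $\sep(\mathcal W)$; this is exactly where the hypothesis ``for all sufficiently small $\varepsilon$'' enters. The fixed point is the (unique small) $P$, and the same estimates give $\norm P\le\rho$, i.e.\ $\norm P$ is of order $\varepsilon\norm{\mathcal H}/\sep(\mathcal W)=\varepsilon/\relsep(\mathcal W)$.

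Finally I would convert $\norm P$ into a subspace distance. Since $Q$ is unitary, $\dist(\mathcal W,\widetilde{\mathcal W})$ equals the $2$-norm of the difference of the orthogonal projectors onto $\sspan\vett{I_n}{0}$ and $\sspan\vett{I_n}{P}$, which is $\sin\theta_{\max}$ for the largest canonical angle $\theta_{\max}$ between the two; for a graph subspace $\tan\theta_{\max}=\norm P_2$, so $\dist(\mathcal W,\widetilde{\mathcal W})=\sin\theta_{\max}\le\norm P_2\le\norm P\le C\,\varepsilon/\relsep(\mathcal W)$ with a moderate $C$ (carrying the constants through the above gives $C$ close to $2$). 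I expect the only genuinely delicate step to be the contraction estimate: the quadratic term $P(A_{12}+E_{12})P$ must stay subordinate to the linear part $\mathbf T(P)$, and it is precisely the need to dominate it by $\sep(\mathcal W)$ that both forces the smallness of $\varepsilon$ and makes $1/\relsep(\mathcal W)$ the right factor in the bound.
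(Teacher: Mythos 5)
The paper gives no proof of this theorem; it is quoted directly from Stewart and Sun, so there is nothing internal to compare against. Your argument is correct and is essentially the canonical proof from that reference: block-triangularize along $\mathcal W$ with a unitary $Q$, seek the perturbed subspace as a graph $\sspan Q\vett{I_n}{P}$, solve the resulting quadratic equation for $P$ by a contraction whose linear part is controlled by $\norm{\mathbf{T}^{-1}}=1/\sep(\mathcal W)$ (which is where both the factor $1/\relsep(\mathcal W)$ and the smallness condition on $\varepsilon$ originate), and convert $\norm{P}$ into a projector distance via canonical angles, $\dist=\sin\theta_{\max}\le\tan\theta_{\max}=\norm{P}_2$. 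The only two caveats are ones you already note or that lie in the statement rather than in your proof: the ordering $\sep(A_{22},A_{11})$ versus $\sep(A_{11},A_{22})$ (immaterial in the Frobenius norm and absorbed into $C$ otherwise), and the fact that the theorem implicitly refers to the invariant subspace of $\widetilde{\mathcal H}$ that continues $\mathcal W$ --- which is precisely the one your fixed point constructs.
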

The result is stated in a stronger form in \cite{stewartsun},
using the norms of $E_{12}$ and
$\mathcal{H}_{12}$ in two suitable block partitions of the
involved matrices, but here we favor this form for the sake of simplicity.

A bound on the subspace distance can be transformed
into a bound on the solutions of the associated Riccati equations.
\begin{lemma}\label{svdtrick}
 Let $\mathcal{W}=\sspan{\begin{bmatrix}I_n\\X\end{bmatrix}}$, $\widetilde{\mathcal{W}}=\sspan{\begin{bmatrix}I_n\\\widetilde{X}\end{bmatrix}}$. Then, for both spectral and Frobenius norm, we have
$
 \norm{X-\widetilde{X}}\leq \left(\norm{I_n}^2+\norm{X}^2\right)^{1/2}\left(\norm{I_n}^2+\norm{\widetilde{X}}^2\right)^{1/2}\dist(\mathcal{W},\widetilde{\mathcal{W}}).
$
\end{lemma}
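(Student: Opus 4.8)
The plan is to exploit the fact that the two subspaces $\mathcal{W}$ and $\widetilde{\mathcal{W}}$ are graphs of the linear maps $X$ and $\widetilde X$, so that the columns $\begin{bmatrix}I_n\\X\end{bmatrix}$ and $\begin{bmatrix}I_n\\\widetilde X\end{bmatrix}$ are bases of them, albeit not orthonormal ones. First I would record the standard characterization $\dist(\mathcal{W},\widetilde{\mathcal{W}})=\norm{P_{\mathcal{W}}-P_{\widetilde{\mathcal{W}}}}$ and the (equally standard) alternative formula for the distance in terms of \emph{any} bases: if $\mathcal{W}=\sspan V$ and $\widetilde{\mathcal{W}}=\sspan{\widetilde V}$ with $V,\widetilde V$ of full column rank, then $\dist(\mathcal{W},\widetilde{\mathcal{W}})=\norm{(I-P_{\mathcal{W}})\widetilde V(\widetilde V^*\widetilde V)^{-1/2}}$, or more simply $\norm{(I-P_{\mathcal{W}})\widetilde V W^\dagger}$ for a suitable normalization. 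The cleanest route, however, is to go the other way: bound $\norm{X-\widetilde X}$ directly by the norm of $\begin{bmatrix}0\\X-\widetilde X\end{bmatrix}=\begin{bmatrix}I_n\\X\end{bmatrix}-\begin{bmatrix}I_n\\\widetilde X\end{bmatrix}$, and express this difference of bases through the projections.

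Concretely, write $V=\begin{bmatrix}I_n\\X\end{bmatrix}$, $\widetilde V=\begin{bmatrix}I_n\\\widetilde X\end{bmatrix}$. Since the top block of $\widetilde V$ is $I_n$, we have $\widetilde V = P_{\widetilde{\mathcal{W}}}\widetilde V$, and I would like to replace $P_{\widetilde{\mathcal{W}}}$ by $P_{\mathcal{W}}$ at the cost of the distance. The key identity is
\[
V - \widetilde V \;=\; V(V^*V)^{-1}V^*\widetilde V \;-\; \widetilde V \;=\; (P_{\mathcal{W}} - I)\widetilde V \;=\; (P_{\mathcal{W}} - P_{\widetilde{\mathcal{W}}})\widetilde V
\]
provided $V(V^*V)^{-1}V^*\widetilde V$ has top block $I_n$, which holds because both subspaces are graphs over the same coordinate $n$-plane — this is exactly the point where being a graph is used, and it is the step I expect to need the most care. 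Granting it, taking norms gives $\norm{V-\widetilde V}\le \norm{P_{\mathcal{W}}-P_{\widetilde{\mathcal{W}}}}\,\norm{\widetilde V} = \dist(\mathcal{W},\widetilde{\mathcal{W}})\,\norm{\widetilde V}$, and since $\norm{V-\widetilde V}^2 \ge \norm{X-\widetilde X}^2$ (bottom block) while $\norm{\widetilde V}\le (\norm{I_n}^2+\norm{\widetilde X}^2)^{1/2}$ this is already most of the claimed bound, with only one of the two factors.

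To obtain the symmetric bound with both factors $(\norm{I_n}^2+\norm{X}^2)^{1/2}$ and $(\norm{I_n}^2+\norm{\widetilde X}^2)^{1/2}$, I would instead normalize the bases: set $V_0 = V(V^*V)^{-1/2}$ and $\widetilde V_0=\widetilde V(\widetilde V^*\widetilde V)^{-1/2}$, which have orthonormal columns, so that $\norm{V_0 V_0^* - \widetilde V_0\widetilde V_0^*}=\dist(\mathcal{W},\widetilde{\mathcal{W}})$. Writing $\begin{bmatrix}0\\X-\widetilde X\end{bmatrix} = V\,(V^*V)^{-1/2}\cdot(V^*V)^{1/2} - \widetilde V\,(\widetilde V^*\widetilde V)^{-1/2}\cdot(\widetilde V^*\widetilde V)^{1/2}$ and using that the top block vanishes to insert the projector difference in the middle, one gets a product in which the two Gram factors $\norm{(V^*V)^{1/2}}=\norm{V}$ and $\norm{(\widetilde V^*\widetilde V)^{1/2}}=\norm{\widetilde V}$ appear on the two sides and the projector difference in between; the bottom block then yields $\norm{X-\widetilde X}\le \norm{V}\,\norm{\widetilde V}\,\dist(\mathcal{W},\widetilde{\mathcal{W}})$. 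Finally $\norm{V} = \norm{\begin{bmatrix}I_n\\X\end{bmatrix}}\le (\norm{I_n}^2+\norm{X}^2)^{1/2}$ (with equality in the Frobenius norm, and the inequality easily checked in the spectral norm), and likewise for $\widetilde V$, giving the stated estimate for both norms. The main obstacle is making the bookkeeping with the non-orthonormal bases rigorous — in particular justifying that the mixed projector can be inserted precisely because both subspaces project isomorphically onto the top $n$ coordinates — but once the graph structure is used to pin down the top block, the rest is a direct norm estimate.
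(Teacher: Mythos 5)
There is a genuine gap, and in fact the ``key identity'' in your first route is false. You claim $V(V^*V)^{-1}V^*\widetilde V$ has top block $I_n$ because both subspaces are graphs over the same coordinate plane; but the orthogonal projection $P_{\mathcal{W}}\widetilde V$ does not preserve the graph parametrization. Take $n=m=1$, $X=1$, $\widetilde X=0$: then $P_{\mathcal{W}}\widetilde V=\vett{1/2}{1/2}$, whose top block is $1/2$, so $V-\widetilde V\neq (P_{\mathcal{W}}-P_{\widetilde{\mathcal{W}}})\widetilde V$. Worse, the intermediate bound you derive from it, $\norm{X-\widetilde X}\leq \norm{\widetilde V}\dist(\mathcal{W},\widetilde{\mathcal{W}})$ with only one Gram factor, is false in this example: the left side is $1$ while the right side is $1/\sqrt 2$. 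This is exactly why both factors $(\norm{I_n}^2+\norm{X}^2)^{1/2}$ and $(\norm{I_n}^2+\norm{\widetilde X}^2)^{1/2}$ are needed.

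Your second, ``symmetric'' route is the right idea but the decisive step --- how to ``insert the projector difference in the middle'' --- is never carried out, and it cannot be done with the Gram factors alone (they are $n\times n$ and cannot sandwich the $(n+m)\times(n+m)$ matrix $P_{\mathcal{W}}-P_{\widetilde{\mathcal{W}}}$ to produce an $m\times n$ result). The missing ingredient is a left factor that annihilates $\mathcal{W}$, i.e.\ a basis of $\mathcal{W}^\perp$. Since $\rowvett{-X}{I}P_{\mathcal{W}}=0$ and $P_{\widetilde{\mathcal{W}}}\widetilde V=\widetilde V$, one has the correct identity
\begin{equation*}
 X-\widetilde X=\rowvett{-X}{I}\left(P_{\mathcal{W}}-P_{\widetilde{\mathcal{W}}}\right)\vett{I}{\widetilde X},
\end{equation*}
and submultiplicativity together with $\norm{\rowvett{-X}{I}}=(\norm{I}^2+\norm{X}^2)^{1/2}$ (in both norms, via an SVD of $X$) gives the lemma. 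This is, up to normalization, exactly the paper's argument: $\rowvett{-X}{I}=(I+XX^*)^{1/2}W_2^*$ with $W_2$ the orthonormal basis of $\mathcal{W}^\perp$, and $\norm{W_2^*Z_1}=\dist(\mathcal{W},\widetilde{\mathcal{W}})$ is the Golub--Van Loan characterization of the distance. So you have identified the correct structure of the bound, but the proof as written does not go through without introducing the annihilator $\rowvett{-X}{I}$ (or $W_2$), and the first route must be discarded.
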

\begin{proof}
We use the characterization of $\dist$ given in
\cite[Theorem~2.6.1]{golubvanloan}; the proof there refers to the
spectral norm, but it can be adapted to the Frobenius norm. We
apply the result to the orthonormal matrices
\begin{align*}
 W_2=&\begin{bmatrix}-X^*\\I\end{bmatrix}(I+XX^*)^{-1/2}, & Z_1=\begin{bmatrix}I\\\widetilde{X}\end{bmatrix}(I+\widetilde{X}^*\widetilde{X})^{-1/2},
\end{align*}
then use norm submultiplicativity
\[ \norm{X-\widetilde{X}}\!\leq\!\norm{(I+X^*X)^{1/2}}\!\norm{(I+X^*X)^{-1/2}[-X\ \ I]\vett{I}{\widetilde{X}}(I+\widetilde{X}^*\widetilde{X})^{-1/2}}\!\norm{(I+\widetilde{X}^*\widetilde{X})^{1/2}}.
\]
Finally, the terms $\norm{(I+X^*X)^{-1/2}}$ and $\norm{(I+\wt X^*\wt X)^{-1/2}}$ can be transformed into the desired form by taking an SVD of $X$ and $\wt X$, respectively.
\end{proof}

A tighter bound, which still behaves essentially as $O((\relsep{\mathcal{W}})^{-1})$, can be found in \cite{XXGuoBai}.


\subsection{Relations between $\gap$ and $\sep$}

If the Frobenius norm is used, the $\gap(\mathcal H)$ and the $\sep(\mathcal W)$ are the smallest eigenvalue and the smallest singular value of the matrix
$(I\otimes A_{11}-A_{22}^T\otimes I)$,
respectively (compare \eqref{eq:defsep} and \eqref{defA11A12}). It is well known that the two numbers coincide for normal matrices~\cite[Exercise 5.2.1]{stewartsun}, but may differ significantly in the nonnormal case \cite[Example~5.2.4]{stewartsun}. The same happens for any norm, and in view of \eqref{gapsep} we obtain that $\sep(\mathcal{W})\leq \gap(\mathcal{H})$.

For nonnormal matrices the subspace separation is a better tool to gauge the distance from criticality, especially when conditioning properties are in exam. However, as far as we know, all the literature regarding shift methods deals only with the gap as a measure of criticality. The geometrical intuition is clearer in the gap setting and the proofs are easier to carry on. On the other hand, the whole point of shifting strategies is getting rid of the two real eigenvalues $\la_n$ and $\la_{n+1}$ close to the origin, and it is less clear how we should define the gap in when the two eigenvalues have been moved. 
In view of \eqref{gapsep}, we extend the definition of gap in the following way. Let $\mathcal{W}$ be an invariant subspace of a matrix $A$, and $A_{11},A_{22}$ as in \eqref{defA11A12}. We set
\[
 \gap(\mathcal{W}):=\min_{\lambda\in\sigma(A_{11}),\mu\in\sigma(A_{22})} \abs{\lambda-\mu}.
\]
Similarly, we define the gap between two invariant subspaces of $A$ as
\[
 \gap(\mathcal{U},\mathcal{V}):=\min_{\text{$\lambda$ associated to $\mathcal{U}$, $\mu$ associated to $\mathcal{V}$}}\abs{\lambda-\mu}.
\]
In the following, we define our subspace shift technique in terms of the gap metric, because only by resorting to eigenvalue location criteria we can select suitable subspaces for its application. When discussing conditioning, moving to the $\sep$ setting (and having good separation properties in this setting) is necessary. However, as far as we know, even a complete theory of the basic shift technique in terms of $\sep$ does not exist at present; the intuitive assertion that ``things get better when we move the eigenvalues more far apart'' is difficult to formalize in terms of the $\sep$ metric. We are not able to give full proof for many of the conditioning-related assertions, but we provide at least partial ones that show our claims when the separation bounds behave as suggested by the gap metric analogy. This in particular includes the case in which $\H$ is normal or departs only slightly from normality.


\section{Theoretical bases}\label{sec:ds}
\subsection{The shift technique}\label{shift}

The shift technique has been applied in \cite{gim} to the M-NARE
\eqref{eq:NARE} where $\mathcal{M}$ is a singular irreducible
M-matrix, that is, when at least one between $\la_n$ and
$\la_{n+1}$ is $0$. Without loss of generality one can assume that
$\la_n=0$: the case $\la_n>0=\la_{n+1}$ can be reduced to the case
$\la_n=0$ by a simple trick \cite[Lemma 5.1]{gim}.

The shift technique is rooted in the following results.
\begin{lemma}[Brauer's theorem \cite{brauer52}]\label{th:brauer}
Let $(\lambda, v)$ be an eigenpair for the matrix $T$. Let $u$ be
a vector with $u^*v=1$ and $s$ be a scalar. The eigenvalues of the
matrix $\wh{T} := T+svu^*$ are the same as those of $T$, except for one occurrence of
$\lambda$ which is replaced by $\lambda+s$.
\end{lemma}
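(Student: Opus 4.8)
The plan is to reduce $\wh T$ to block triangular form by a similarity transformation adapted to the eigenvector $v$. First I would extend $v$ to a basis of $\C^n$: choose $V_2$ so that $V = [\,v \mid V_2\,]$ is invertible. Since $Tv = \lambda v$, the first column of $V^{-1}TV$ is $\lambda e_1$, hence $V^{-1}TV = \left[\begin{smallmatrix}\lambda & w^*\\ 0 & M\end{smallmatrix}\right]$ for some $w\in\C^{n-1}$ and $M\in\C^{(n-1)\times(n-1)}$. The spectrum of a block triangular matrix is the union, counted with multiplicity, of the spectra of its diagonal blocks, so $\sigma(M)$ is exactly $\sigma(T)$ with one copy of $\lambda$ removed.

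Next I would push the rank-one term through the same change of basis. Because $V^{-1}v = e_1$ by construction, $V^{-1}(svu^*)V = s\,e_1(u^*V)$, and since $u^*v = 1$ the row vector $u^*V$ equals $[\,1 \mid u^*V_2\,]$; therefore $V^{-1}(svu^*)V$ has first row $[\,s \mid s\,u^*V_2\,]$ and all remaining rows zero. Adding it to $V^{-1}TV$ keeps the $(2,1)$ block equal to $0$ and modifies only the first row, so $V^{-1}\wh T V = \left[\begin{smallmatrix}\lambda+s & \star\\ 0 & M\end{smallmatrix}\right]$. Consequently $\sigma(\wh T) = \{\lambda+s\}\cup\sigma(M)$, which is the claim.

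An equivalent route uses the matrix determinant lemma: for $z\notin\sigma(T)$ one has $\det(zI-\wh T) = \det(zI-T)\bigl(1 - s\,u^*(zI-T)^{-1}v\bigr)$, and $(zI-T)^{-1}v = (z-\lambda)^{-1}v$ turns the parenthesis into $(z-\lambda-s)/(z-\lambda)$, so $\det(zI-\wh T) = \det(zI-T)(z-\lambda-s)/(z-\lambda)$; both sides are polynomials in $z$ that agree off a finite set, hence everywhere, which yields the conclusion with multiplicities.

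I expect no real obstacle here; the only point needing a little care --- and a minor one --- is the bookkeeping of algebraic multiplicities, namely reading ``one occurrence of $\lambda$ is replaced by $\lambda+s$'' correctly when $\lambda$ is a repeated eigenvalue of $T$, and, in the determinant variant, justifying the step from an identity valid on $\C\setminus\sigma(T)$ to a genuine polynomial identity. I would present the similarity-transformation argument, as it is the cleanest to write up and requires no analytic continuation.
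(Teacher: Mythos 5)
Your proof is correct. The paper itself gives no proof of this lemma --- it is quoted as a classical result of Brauer \cite{brauer52} --- so there is nothing to compare against; your similarity-transformation argument (peeling off the eigenpair $(\lambda,v)$ into a $1\times 1$ diagonal block and observing that the rank-one update $s e_1(u^*V)$ only touches the first row) is the standard, complete proof, and the determinant-lemma variant you sketch is an equally valid alternative.
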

\begin{theorem}[\cite{gim}]\label{th:gim}
Let $\mathcal{H}$ be the as in \eqref{defH} associated with the
M-NARE \eqref{eq:NARE} with $\lambda_n=0$, and let $v_n$ be an
eigenvector relative to $\lambda_n$; consider the matrix
\[
\wh{\mathcal{H}}:= \mathcal{H} + sv_nu^*,
\]
with $u^*v_n=1$ and $s>0$. Then, the minimal solution $X_*$ of the
M-NARE associated with $\H$ is a solution of the NARE associated
with $\wh{\H}$. Moreover, $m$ eigenvalues of $\wh{\H}$ lie in the closed
left half plane and $n$ in the open right half plane, whose
corresponding invariant subspace is spanned by the the columns of
$\vett I{X_*}$.
\end{theorem}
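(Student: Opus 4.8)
The plan is to prove Theorem~\ref{th:gim} by combining Brauer's theorem (Lemma~\ref{th:brauer}) on the eigenvalue side with a direct verification on the solution side. First I would recall from~\eqref{eq:HIX} that, since $X_*$ is the minimal solution of the M-NARE associated with $\H$, the columns of $\vett{I}{X_*}$ span an invariant subspace of $\H$, namely the antistable one, and $\H\vett{I}{X_*} = \vett{I}{X_*}(D-CX_*)$, where $\sigma(D-CX_*) = \{\la_1,\dots,\la_n\}$. To show that $X_*$ also solves the NARE associated with $\wh{\H}$, I would compute
\[
\wh{\H}\vett{I}{X_*} = \H\vett{I}{X_*} + s\, v_n (u^* \vett{I}{X_*}) = \vett{I}{X_*}(D-CX_*) + s\, v_n \, w^*,
\]
where $w^* = u^*\vett{I}{X_*}$ is a row vector. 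The key observation is that $v_n$ itself lies in the antistable invariant subspace (it is an eigenvector for $\la_n$, which is among the $n$ rightmost eigenvalues since $\la_n \ge 0 \ge \la_{n+1}$), so $v_n = \vett{I}{X_*}\, z$ for some vector $z\in\C^n$; in fact $z$ is an eigenvector of $D-CX_*$ relative to $\la_n = 0$. Substituting, $\wh{\H}\vett{I}{X_*} = \vett{I}{X_*}\bigl((D-CX_*) + s\, z w^*\bigr)$, which shows the columns of $\vett{I}{X_*}$ span an invariant subspace of $\wh{\H}$; by the characterization stated after~\eqref{eq:HIX} (a matrix $X$ solves the NARE iff the columns of $\vett{I}{X}$ span an invariant subspace of the linearizing matrix), $X_*$ solves the NARE associated with $\wh{\H}$.

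Next I would analyze the spectrum of $\wh{\H}$. By Brauer's theorem applied with $T=\H$, the eigenpair $(\la_n,v_n)=(0,v_n)$, the vector $u$, and scalar $s>0$: the eigenvalues of $\wh{\H}$ are those of $\H$ with one copy of $0$ replaced by $0+s=s>0$. Hence the eigenvalues of $\wh{\H}$, listed with multiplicity, are $\la_1,\dots,\la_{n-1},s,\la_{n+1},\dots,\la_{n+m}$. Using the ordering~\eqref{eq:ordeig}, exactly $n$ of these (namely $\la_1,\dots,\la_{n-1}$ together with $s>0$) lie in the open right half-plane, and the remaining $m$ (namely $\la_{n+1},\dots,\la_{n+m}$, all with nonpositive real part) lie in the closed left half-plane. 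Here I would note that even in the critical case $\la_{n+1}=0$ this still gives $m$ eigenvalues in the \emph{closed} left half-plane, matching the statement. It remains to identify which invariant subspace corresponds to the $n$ right-half-plane eigenvalues of $\wh{\H}$: from the computation above, $\vett{I}{X_*}$ spans an $n$-dimensional invariant subspace on which $\wh{\H}$ acts as $(D-CX_*)+s\,z w^*$, and by Brauer's theorem applied to this $n\times n$ block (whose eigenvalues are $\la_1,\dots,\la_{n-1},0$) the eigenvalue $0$ is replaced by $s$, so the spectrum of $(D-CX_*)+s\,z w^*$ is exactly $\{\la_1,\dots,\la_{n-1},s\}$, the $n$ right-half-plane eigenvalues of $\wh{\H}$. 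This completes the proof.

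The main obstacle I anticipate is the bookkeeping around the fact that $v_n$ must genuinely lie inside the antistable subspace spanned by $\vett{I}{X_*}$; this is the crux that makes $X_*$ a shared solution. In the generic (irreducible) case this is immediate because $\la_n$ is a simple eigenvalue strictly separated from $\la_{n+1}$, but in the critical case $\la_n=\la_{n+1}=0$ sits in a size-$2$ Jordan block, and one must be careful to pick $v_n$ as the eigenvector belonging to the antistable subspace (the invariant subspace associated with the $n$ rightmost eigenvalues), not some other vector in the generalized eigenspace of $0$. Once $v_n$ is chosen correctly as (the image under $\vett{I}{X_*}$ of) an eigenvector of $D-CX_*$, the rest is routine linear algebra. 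A secondary, minor point is checking that $u^*v_n=1$ is compatible with a sign/normalization choice that keeps $s>0$ meaningful, but this is just the hypothesis of Brauer's theorem and requires no extra work. One could also phrase the whole argument slightly more abstractly by invoking Brauer's theorem once on $\H$ and once on the compressed block $D-CX_*$ and checking consistency, but the explicit substitution above is the cleanest route.
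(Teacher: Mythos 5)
The paper does not prove this theorem --- it is quoted from \cite{gim} without proof --- so there is no in-paper argument to compare against. Your proposal is correct and is essentially the standard argument: the additive rank-one update leaves $\sspan\vett{I}{X_*}$ invariant because $v_n$ factors through $\vett{I}{X_*}$, and Brauer's theorem (applied once to $\H$ and once to the compressed block $D-CX_*+szw^*$, consistently, since $w^*z=u^*v_n=1$) locates the spectrum. The one point worth making fully explicit is the crux you flag at the end: $D-CX_*$ has $0$ as an eigenvalue with some eigenvector $z$, so $\vett{I}{X_*}z$ is an eigenvector of $\H$ for $0$, and since for irreducible $\mathcal{M}$ the eigenvalue $0$ of $\H$ has geometric multiplicity one (it is either simple or sits in a single size-$2$ Jordan block), $v_n$ must be a scalar multiple of $\vett{I}{X_*}z$; after rescaling $z$ this gives $v_n=\vett{I}{X_*}z$ with no choice to be made. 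With that observation pinned down, the proof is complete.
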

The shift technique consists in computing one eigenvector $v_n$
corresponding to the eigenvalue $\lambda_n=0$, and using it to
construct the NARE associated with $\wh{\mathcal{H}}$, which we
call the {\em shifted NARE}. The matrix $\wh{\mathcal{H}}$ has
eigenvalues $\la_1, \ldots, \la_{n-1},
\wh{\la}_n,\la_{n+1},\ldots,\la_{n+m}$ where $\wh{\la}_n=s$ (the
eigenvalue $\la_n$ has been ``shifted'' from $0$ to $s$, this
justifies the name of the technique). Observe that $\gap(\wh{\mathcal{H}})>\gap(\mathcal{H})$;
thus, better conditioning and faster convergence are
expected, once the Cayley parameter $\gamma$ is fixed. It has been
proved in \cite{gim} that SDA applied to the shifted equation,
using the same Cayley parameter as the nonshifted case, but with
the initial values as in \eqref{SDAini}, constructed from $\wh{\H}$,
converges quadratically with a better rate of convergence than the
nonshifted case. Numerical experiments \cite{gim,bilm} show that
this technique reduces dramatically the number of steps of
iterations like SDA and CR. In the critical case, the convergence from linear becomes
quadratic.

It can happen that the Riccati equation associated with
$\wh{\mathcal{H}}$ is not an M-NARE; that is, $\wh{\mathcal
M}=\left[\begin{smallmatrix} I & 0\\0& -I\end{smallmatrix}\right] \wh{\mathcal{H}}$
need not be an M-matrix. Hence,
there is no guarantee that the SDA can be carried out without
breakdown, even if in practice this method works well and the
applicability of SDA is usually assumed \cite{gim}.

Since $\lambda_n=0$, the vector $v_n$ can be computed easily
as $\ker \mathcal{M}$. In principle, the shift technique could be
used also for nonsingular M-matrices, i.e., the hypothesis
$\lambda_n=0$ is not actually needed in Theorem~\ref{th:gim}.
In this case there is no simple relation among the eigenvectors of $\mathcal{M}$ and $\mathcal{H}$, and different techniques are needed for the computation of $\lambda_n$
and $v_n$; for instance, the power method. However, in the close-to-critical case the eigenvector $v_n$ is ill-conditioned and therefore it cannot be computed with good
accuracy.

To solve this problem, we present in Section \ref{sec:sushitech} a new technique which shifts a whole invariant subspace containing $\la_n$ and $\la_{n+1}$,
without attempting to separate the eigenvectors. To this purpose we use an invariant subspace whose associated eigenvalues are well separated from the rest of the spectrum.


\subsection{Results on separation}
We first provide a couple of simple lemmas that will be used in the following.
\begin{lemma}\label{thm:lemJ}
Let the spectrum of $M\in\C^{n\times n}$ be the union of two
disjoint sets, say $\Lambda_1$ and $\Lambda_2$. Let the columns of
$U$ and $V$ (with full column rank) span the left and right invariant subspaces of $M$
corresponding to the eigenvalues in $\Lambda_1$, respectively.
Then $U^*V$ is nonsingular.

Moreover, let the columns of $W$ (with full column rank) span a right invariant subspace
of $M$ whose corresponding eigenvalues belong all to $\Lambda_2$, then $U^*W=0$.
\end{lemma}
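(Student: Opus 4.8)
The plan is to use the block-triangularization of $M$ adapted to the spectral splitting. First I would pick a unitary $Q$ (or even just an invertible basis change) putting $M$ in block-triangular form with respect to $\Lambda_1$ and $\Lambda_2$; in fact it is cleanest to block-diagonalize. Since $\Lambda_1\cap\Lambda_2=\emptyset$, by the standard theory of invariant subspaces there is an invertible $S=[\,S_1\ \ S_2\,]$ with $S^{-1}MS=\operatorname{diag}(J_1,J_2)$, where $\sigma(J_1)=\Lambda_1$, $\sigma(J_2)=\Lambda_2$, the columns of $S_1$ span the right invariant subspace for $\Lambda_1$, and the columns of $S_2$ span the one for $\Lambda_2$. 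Dually, writing $S^{-*}=[\,T_1\ \ T_2\,]$ (the columns of the conjugate-transpose inverse), the columns of $T_1$ span the \emph{left} invariant subspace of $M$ for $\Lambda_1$, because $M^*T_1 = S^{-*}\operatorname{diag}(J_1^*,J_2^*)\cdot(\text{first block})$ after taking the conjugate transpose of $S^{-1}M = \operatorname{diag}(J_1,J_2)S^{-1}$; concretely $T_1^*M = J_1 T_1^*$. The orthogonality relation $S^{-1}S=I$ gives $T_1^*S_1 = I$ and $T_1^*S_2 = 0$, $T_2^*S_1 = 0$.

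Next I would relate the given $U$, $V$, $W$ to these canonical bases. Since $V$ has full column rank and spans the right invariant subspace for $\Lambda_1$, which is exactly the column space of $S_1$, we have $V = S_1 P$ for some invertible square $P$ (same number of columns, since $\dim$ of that subspace is $|\Lambda_1|$ counted with multiplicity, which is the number of columns of both). Likewise $U = T_1 R$ with $R$ invertible (the left invariant subspace for $\Lambda_1$ is spanned by the columns of $T_1$). Then
\[
 U^*V = (T_1R)^*(S_1P) = R^* (T_1^*S_1) P = R^* P,
\]
a product of two invertible matrices, hence nonsingular. For the second claim, $W$ has full column rank and its column space is contained in the right invariant subspace for $\Lambda_2$, which is the column space of $S_2$; so $W = S_2 N$ for some $N$ (not necessarily square, but that does not matter). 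Then $U^*W = R^*T_1^*S_2 N = R^*\cdot 0 \cdot N = 0$.

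The only genuine obstacle is justifying the block-diagonalization and the identification of the \emph{left} invariant subspace with $\operatorname{colspan}(T_1)$ cleanly; once the decomposition $S^{-1}MS=\operatorname{diag}(J_1,J_2)$ is in hand, the biorthogonality $T_1^*S_1=I$, $T_1^*S_2=0$ is immediate and the rest is bookkeeping. One should be a little careful that "left invariant subspace corresponding to $\Lambda_1$" means the row space of the projection onto $\Lambda_1$ along $\Lambda_2$, equivalently $\operatorname{colspan}(T_1)$; this is where I would spend a sentence. An alternative, slightly slicker route avoids choosing $S$ explicitly: let $\Pi$ be the spectral projector onto the $\Lambda_1$-part of $M$ (a polynomial in $M$, or a contour integral), note $\operatorname{colspan}(V)=\operatorname{colspan}(\Pi)$ and $\operatorname{colspan}(U)=\operatorname{colspan}(\Pi^*)=\operatorname{rowspan}(\Pi)^*$, and $\operatorname{colspan}(W)\subseteq\operatorname{colspan}(I-\Pi)=\ker\Pi$. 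Then $U^*W$: writing $U=\Pi^*R'$ and $W=(I-\Pi)N'$ gives $U^*W=R'^*\Pi(I-\Pi)N'=0$. For nonsingularity of $U^*V$, restrict the nondegenerate pairing; since $\Pi$ acts as the identity on $\operatorname{colspan}(V)$ and $U^*=\,\cdot\,\Pi$ composed appropriately, $U^*V$ is the Gram-type matrix of two bases of dual spaces under a perfect pairing, hence invertible. I would present the $S$-based argument as the main one since it is the most elementary and self-contained.
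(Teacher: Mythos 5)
Your argument is correct and is essentially the paper's proof: the paper uses the Jordan canonical form $L^{-1}ML=J$ (a particular block-diagonalization) and writes $V=L_1P$, $U^*=QR_1^*$ with $L_1$ the first $k$ columns of $L$ and $R_1^*$ the first $k$ rows of $L^{-1}$, so that $U^*V=QR_1^*L_1P=QP$ by the same biorthogonality $R_1^*L_1=I$, $R_1^*L_2=0$ you invoke. Your spectral-projector variant is a nice alternative phrasing, but the main line of reasoning coincides with the paper's.
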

\begin{proof}Let $L^{-1}ML=J$ be the Jordan canonical form of
$M$ where the Jordan blocks are ordered such that the $k$ eigenvalues
in $\Lambda_1$ come first, then, a basis of the right invariant subspace
of $M$ corresponding to $\Lambda_1$ is made by the first $k$ columns of
$L$, say $L_1$. Thus $V=L_1P$ for some nonsingular
$P\in\C^{k\times k}$. Similarly, $U^*=QR_1^*$ where $R_1^*$ are
the first $k$ rows of $L^{-1}$ and $Q\in\C^{k\times k}$ is
nonsingular, thus $U^*V=QR_1^*L_1P=QP$ is nonsingular. By a
similar argument, it can be proved that left and right invariant
subspaces corresponding to different eigenvalues are orthogonal.
\end{proof}
\begin{lemma}\label{miscellany}
 Let
\[
 A=\twotwo{A_{11}}{A_{12}}{0}{A_{22}},
\]
where $\sigma(A_{11}) \cap \sigma(A_{22})=\emptyset$.
\begin{enumerate}
\item \label{diagz} There is a matrix
\[
 Z_e=\twotwo{I}{Z}{0}{I}
\]
with $\norm{Z}\leq\frac{\norm{A_{12}}}{\sep(A_{11},A_{22})}$ such that $Z_e A Z_e^{-1}=\diag(A_{11},A_{22})$
\item \label{ownfact} For each $B$, $\sep(A_{11},B)\geq \sep(A,B)$ and $\sep(A_{22},B)\geq \sep(A,B)$.
\item \label{sepcond} $\sep(M,N)\leq \sep(T_1MT_1^{-1},T_2NT_2^{-1})\kappa(T_1)\kappa(T_2)$.
\item \label{diagsep} If $M=\diag(M_1,M_2)$ and $N=\diag(N_1,N_2)$, then $\sep(M,N)=\min_{i,j=1,2}\sep(M_i,N_j)$.
\end{enumerate}
\end{lemma}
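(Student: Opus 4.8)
The plan is to prove the four items essentially independently, since they state rather different facts; each is a small self-contained argument built on the Sylvester-operator picture behind $\sep$.

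First, for item~\ref{diagz}, I would look for the similarity $Z_e$ explicitly: writing out $Z_e A Z_e^{-1}$ in block form, one finds it equals $\diag(A_{11},A_{22})$ precisely when $A_{11}Z - ZA_{22} = -A_{12}$. Since $\sigma(A_{11})\cap\sigma(A_{22})=\emptyset$, the Sylvester operator $Z\mapsto A_{11}Z-ZA_{22}$ is invertible, so $Z$ exists; and by the very definition~\eqref{eq:defsep} of $\sep$ applied to this operator, $\norm{A_{12}}=\norm{A_{11}Z-ZA_{22}}\geq \sep(A_{11},A_{22})\norm{Z}$, which gives the stated bound.

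For item~\ref{ownfact}, the cleanest route is to use item~\ref{diagz}. Having $Z_e A Z_e^{-1}=\diag(A_{11},A_{22})$ and noting $\sep(\diag(A_{11},A_{22}),B)=\min(\sep(A_{11},B),\sep(A_{22},B))$ by item~\ref{diagsep}, it would suffice to compare $\sep(A,B)$ with $\sep(Z_eAZ_e^{-1},B)$; but rather than chase the condition number of $Z_e$ (which would only give an inequality with a constant), I would argue directly: a minimizing $X$ for $\sep(A_{11},B)$ can be padded with a zero block to give a competitor $\begin{bmatrix}X\\0\end{bmatrix}$ for $\sep(A,B)$, since $A\begin{bmatrix}X\\0\end{bmatrix}-\begin{bmatrix}X\\0\end{bmatrix}B = \begin{bmatrix}A_{11}X-XB\\0\end{bmatrix}$ (here the zero $(2,1)$ block of $A$ is essential), and the norm is unchanged under zero-padding for both spectral and Frobenius norm; likewise a minimizer for $\sep(A_{22},B)$ pads to $\begin{bmatrix}0\\X\end{bmatrix}$, using the upper-triangular structure so that $A_{12}$ never meets it. This shows $\sep(A,B)\leq\sep(A_{ii},B)$ for $i=1,2$.

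Item~\ref{sepcond} is a change-of-variables estimate: given any $X$, apply the definition with $T_1^{-1}XT_2$ in place of $X$ to the transformed Sylvester operator, and bound the resulting norms using submultiplicativity, $\norm{T_1^{-1}XT_2}\leq\kappa(T_1)\kappa(T_2)\norm{X}/(\norm{T_1}\norm{T_1^{-1}}\cdots)$ — being careful to track which factors cancel; the result is $\sep(M,N)\leq\kappa(T_1)\kappa(T_2)\sep(T_1MT_1^{-1},T_2NT_2^{-1})$. Item~\ref{diagsep} is the most computational: ``$\geq$'' follows by taking a global minimizer $X$, partitioning it into blocks $X_{ij}$ conformably, observing that $MX-XN$ has blocks $M_iX_{ij}-X_{ij}N_j$ so that $\norm{MX-XN}\geq \norm{M_iX_{ij}-X_{ij}N_j}\geq\sep(M_i,N_j)\norm{X_{ij}}$ for each $i,j$, and then — and this is the only delicate point — noting that for the Frobenius norm $\norm{X}^2=\sum_{ij}\norm{X_{ij}}^2$, so the worst (smallest) ratio $\sep(M_i,N_j)$ dominates; for the spectral norm one instead takes $X$ to be a single nonzero block. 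The reverse inequality ``$\leq$'' is immediate by plugging block-supported test matrices into the left-hand side, exactly as in item~\ref{ownfact}. The main obstacle is getting item~\ref{diagsep} to hold as a genuine equality for both norms simultaneously, since the spectral and Frobenius cases require slightly different extremal test matrices; everything else is routine once the Sylvester-operator viewpoint is in place.
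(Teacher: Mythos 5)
Your arguments for items 1, 3 and 4 are the standard ones (the paper simply cites Stewart--Sun for those three items), and your treatment of item~\ref{ownfact} via restriction to block-supported test matrices is exactly the paper's route. However, there is a genuine error in your proof of the second half of item~\ref{ownfact}. For the $A_{11}$ half the padding works as you say: $A\vett{X}{0}-\vett{X}{0}B=\vett{A_{11}X-XB}{0}$ precisely because the $(2,1)$ block of $A$ vanishes. But for the $A_{22}$ half your claim that ``$A_{12}$ never meets it'' is false: $A\vett{0}{X}-\vett{0}{X}B=\vett{A_{12}X}{A_{22}X-XB}$, so the cross term does appear, and the inequality you obtain, $\norm{A\vett{0}{X}-\vett{0}{X}B}\geq\norm{A_{22}X-XB}$, points the wrong way --- it bounds the competitor's Rayleigh-type quotient from \emph{below} by $\sep(A_{22},B)$, whereas you need an upper bound to conclude $\sep(A,B)\leq\sep(A_{22},B)$. (The paper's own one-line justification is equally terse on this point.) The standard fix, at least for the Frobenius norm, is to pass to the adjoint: $\sep_F(A,B)=\sigma_{\min}(I\otimes A-B^T\otimes I)=\sep_F(A^*,B^*)$, and $A^*$ is block \emph{lower} triangular, so the padding $\vett{0}{Y}$ applied to $A^*$ yields exactly $\sep_F(A_{22}^*,B^*)=\sep_F(A_{22},B)$ as the restricted minimum. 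Equivalently, the Sylvester operator of $(A,B)$ is block upper triangular with diagonal blocks the Sylvester operators of $(A_{11},B)$ and $(A_{22},B)$, and the smallest singular value of a block triangular operator is bounded above by that of each diagonal block (restriction argument applied once to the operator and once to its adjoint).

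A smaller remark: you correctly flag that the equality in item~\ref{diagsep} is delicate for the spectral norm, and the adjoint trick above has the same caveat; these facts are clean for $\sep_F$, while for $\sep_2$ only inequalities up to modest constants come out of the obvious arguments. Since the paper uses the lemma qualitatively and defers items 1, 3, 4 to the literature, this does not undermine its role, but your proof as written leaves that point open. Finally, in item~\ref{diagz} the Sylvester equation should read $A_{11}Z-ZA_{22}=A_{12}$ (not $-A_{12}$) for the given form of $Z_e$; this sign is immaterial for the norm bound.
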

Items \ref{diagz}, \ref{sepcond} and \ref{diagsep} are in \cite{stewartsun}. Item \ref{ownfact} follows from the definition of $\sep$, by noting that the minimum of $AX-XB$ increases if we restrict to matrices $X$, partitioned conformably with $A$ as $X=\rowvett{X_1}{X_2}^*$, where one of the two blocks is zero.


\subsection{The subspace shift technique}\label{sec:sushitech}
Let $\cs$ and $\ncs$ be two complementary invariant subspaces of $\mathcal{H}$ satisfying the following conditions:
\begin{enumerate}
 \item $\cs$ has small dimension $k$.
 \item $\cs$ is well separated from $\ncs$, i.e., $\gap(\cs,\ncs)\geq \delta_1$, for a $\delta_1>0$ not excessively small.
 \item $\ncs$ does not contain eigenvalues close to the imaginary axis, i.e., $\gap(\ncs_s,\ncs_u)\geq\delta_2$, for a $\delta_2>0$ not excessively small, where $\ncs_s$ and $\ncs_u$ are the invariant subspaces associated with the stable and anti-stable part of $\ncs$.
\end{enumerate}
Let $V, U\in\C^{(n+m)\times k}$ be matrices whose
orthonormal columns span respectively $\cs$ and the left invariant subspace $\lcs$ associated with the same eigenvalues as $\cs$. Notice that $\lcs$ is well defined, since the subset of $\sigma(\H)$ associated to $\cs$ and $\ncs$ are disjoint because $\gap(\cs,\ncs)>0$. By Lemma~\ref{thm:lemJ}, $U^*V$ is nonsingular.

We consider the matrix
\begin{equation}\label{htilde2}
\wh{\H}=\H(I+sV(U^*V)^{-1}U^*),
\end{equation}
which is a rank $k$ modification of $\H$. Its spectral properties are summarized by the
following result.
\begin{theorem}\label{thm:shiftk}
Let the spectrum of $\H\in\C^{(n+m)\times (n+m)}$ be the union of two disjoint sets, say $\Lambda_1=\{\xi_1,\ldots,\xi_k\}$ and $\Lambda_2=\{\xi_{k+1},\ldots,\xi_{n+m}\}$.
Let $V, U\in\C^{(n+m)\times k}$ be matrices whose
orthonormal columns span the right and left invariant subspaces associated with the eigenvalues of $\Lambda_1$, respectively. The matrix $\wh{\H}$ in \eqref{htilde2} has
the same right invariant subspaces as $\H$ and eigenvalues
$\{(1+s)\xi_1, \ldots, (1+s)\xi_k, \xi_{k+1}, \ldots,
\xi_{n+m}\}$.
\end{theorem}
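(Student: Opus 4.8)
The plan is to reduce the rank-$k$ statement to a block-diagonal computation by a similarity transformation, mimicking the proof of Brauer's theorem but applied to an invariant \emph{subspace} rather than a single eigenvector. First I would observe that the modification matrix $I + sV(U^*V)^{-1}U^*$ has a very transparent action: writing $\Pi = V(U^*V)^{-1}U^*$, one checks directly that $\Pi$ is an (oblique) projection, namely $\Pi^2 = \Pi$, whose range is $\cs = \operatorname{span}(V)$ and whose kernel is exactly the invariant subspace $\ncs$ complementary to $\cs$ (this uses $U^*W = 0$ from Lemma~\ref{thm:lemJ}, where the columns of $W$ span $\ncs$). Hence $I + s\Pi$ acts as multiplication by $1+s$ on $\cs$ and as the identity on $\ncs$.

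The key step is to pick a basis adapted to the splitting $\C^{n+m} = \cs \oplus \ncs$. Let $T = \rowvett{V}{W}$ be the (nonsingular) matrix whose first $k$ columns span $\cs$ and whose last $n+m-k$ columns span $\ncs$. Since both $\cs$ and $\ncs$ are invariant for $\H$, we have $T^{-1}\H T = \diag(H_1, H_2)$ with $\sigma(H_1) = \Lambda_1$ and $\sigma(H_2) = \Lambda_2$. In the same basis, $T^{-1}(I + s\Pi)T = \diag((1+s)I_k, I_{n+m-k})$, because $I+s\Pi$ scales $\cs$ by $1+s$ and fixes $\ncs$ pointwise — to see the off-diagonal blocks vanish one again invokes $U^*W = 0$ and the fact that $\Pi V = V$. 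Therefore
\[
 T^{-1}\wh{\H} T = \diag(H_1, H_2)\cdot\diag((1+s)I_k, I_{n+m-k}) = \diag\bigl((1+s)H_1,\ H_2\bigr),
\]
from which both claims follow at once: the eigenvalues of $\wh{\H}$ are $(1+s)\xi_1,\dots,(1+s)\xi_k$ together with $\xi_{k+1},\dots,\xi_{n+m}$, and any right invariant subspace of $\H$ that is a sum of coordinate blocks in this basis (in particular $\cs$, $\ncs$, and—because $\wh{\H}$ and $\H$ are simultaneously block-diagonalized—every invariant subspace of $\H$ lying inside $\cs$ or inside $\ncs$, and their sums) is also invariant for $\wh{\H}$.

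The one point that needs a little care, and which I expect to be the main obstacle in writing the argument cleanly, is the claim that $\wh{\H}$ has \emph{all} the right invariant subspaces of $\H$, not merely the block-structured ones visible from the $\cs \oplus \ncs$ splitting. Here I would argue that any invariant subspace $\mathcal{S}$ of $\H$ decomposes as $\mathcal{S} = (\mathcal{S}\cap\cs) \oplus (\mathcal{S}\cap\ncs)$ because $\Lambda_1$ and $\Lambda_2$ are disjoint (a standard spectral-projection fact: $\mathcal{S}$ is the direct sum of its intersections with the spectral subspaces, and $\cs$, $\ncs$ are unions of such), so it suffices to know that every invariant subspace of $H_i$ is invariant for $(1+s)H_i$, which is immediate since $H_i$ and $(1+s)H_i$ commute and have the same invariant subspaces (scaling by a nonzero scalar does not change the lattice of invariant subspaces). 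Conversely the same block decomposition shows $\wh{\H}$ has no \emph{extra} invariant subspaces beyond those of $\H$, as long as $1+s \neq 0$ keeps $(1+s)\Lambda_1$ and $\Lambda_2$ from creating new coincidences — but for the statement as given (same right invariant subspaces) only the forward inclusion is asserted, so the disjointness of $\Lambda_1$ and $\Lambda_2$ together with $s > 0$ is all we use.
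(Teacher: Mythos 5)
Your proof is correct and follows essentially the same route as the paper's: both hinge on $U^*V$ being invertible and $U^*W=0$ (Lemma~\ref{thm:lemJ}), so that $I+sV(U^*V)^{-1}U^*$ scales $\cs$ by $1+s$ and fixes $\ncs$, and both conclude via the decomposition of an arbitrary invariant subspace into its parts inside $\cs$ and $\ncs$. Your global block-diagonalization by $T=\rowvett{V}{W}$ is just a repackaging of the paper's subspace-by-subspace computation, with the added (welcome) justification of the spectral-decomposition step that the paper merely asserts.
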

\begin{proof}
As above, let $\cs$ be the right invariant subspace of $\H$ corresponding to the eigenvalues in $\Lambda_1$ and let $\ncs$ be the right invariant subspace complementary to $\cs$, corresponding to the remaining eigenvalues $\{\xi_{k+1},\dots,\xi_{m+n}\}$. Let $\mathcal{W}_1\subset \cs$ be a right invariant
subspace of $\H$ spanned by the column of the matrix
$W_1\in\C^{(n+m)\times \ell_1}$, then $W_1=VQ$ for some matrix
$Q\in\C^{k\times \ell_1}$, thus
\[
    \wh{\H} W_1=\H (W_1 + s V(U^*V)^{-1}U^*VQ)=\H (W_1+sVQ)=\H
    W_1(1+s),
\]
and thus $\{(1+s)\xi_1,\ldots,(1+s)\xi_k\}$ are eigenvalues of $\wh
\H$.

Let $\mathcal{W}_2\subset \ncs$ be a right invariant
subspace of $\H$ spanned by the column of the matrix
$W_2\in\C^{(n+m)\times \ell_2}$, then by Lemma \ref{thm:lemJ} we have $U^*W_2=0$ and hence $\wh{\H}
W_2=\H W_2$. Thus, $\{\xi_{k+1},\ldots,\xi_{m+n}\}$ are
eigenvalues of $\wh{\H}$.
Since any right invariant subspace of $\H$ can be written as the
sum of two invariant subspaces contained in $\cs$ and
$\ncs$, respectively, the proof is completed.
\end{proof}

Theorem \ref{thm:shiftk} can be applied to the linearizing matrix $\H$ of a close-to-critical M-NARE \eqref{eq:NARE}, where $\mathcal V$ is chosen to be a subspace containing both $\la_n$ and $\la_{n+1}$. Then the anti-stable invariant subspace $\mathcal{W}$ and thus the Riccati solution $X_*$ of the NARE associated to $\wh{\H}$ are the same as the ones for \eqref{eq:NARE}.

From the point of view of the eigenvalue location and of the gap metric, the behavior of the subspace shift technique is clear: the eigenvalues closer to the imaginary axis, which are responsible for the slow convergence and ill-conditioning, are multiplied by a factor $1+s$, which takes them farther from the imaginary axis and thus improve the gap. If the factor $1+s$ is not too large then the Cayley gap is reduced and the numerical algorithms based on matrix iterations converge faster. In order to give a complete treatment of the stability properties of the technique, we have to resort to the separation metric.


\subsection{Conditioning of $U^*V$}
For the subspace shift technique, we need to
form $(U^*V)^{-1}$; therefore, it is crucial that the condition of this matrix is not worse than the conditioning of the problem we are solving. Similarly to what happens in the original problem, we can relate its conditioning to the separation between $\cs$ and $\ncs$.
\begin{theorem}\label{conduv}
Let $U$ and $V$ be orthonormal bases for $\lcs$ and $\cs$ as defined above. Then, we have
\[
 \norm{(U^*V)^{-1}}\leq C\relsep(\mathcal{V})^{-1}+D
\]
for moderate constants $C, D>0$.
\end{theorem}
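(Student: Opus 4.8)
The plan is to block-triangularize $\H$ so that the two subspaces $\cs$ and $\ncs$ become exactly diagonal blocks, bound the norm of the transforming matrix by $\relsep(\cs)^{-1}$, and then compute $U^*V$ explicitly in the transformed coordinates. Concretely, let $Q=[\,V \mid \ncs\text{-basis}\,]$ be unitary with the first $k$ columns spanning $\cs$, so that $Q^*\H Q = \twotwo{A_{11}}{A_{12}}{0}{A_{22}}$ with $\sigma(A_{11})=\Lambda_1$, $\sigma(A_{22})=\Lambda_2$, disjoint by hypothesis~2 ($\gap(\cs,\ncs)\geq\delta_1>0$). By Lemma~\ref{miscellany}, item~\ref{diagz}, there is $Z_e=\twotwo{I}{Z}{0}{I}$ with $\norm{Z}\leq \norm{A_{12}}/\sep(A_{11},A_{22}) \leq \norm{\H}/\sep(\cs)$ such that $Z_e Q^*\H Q Z_e^{-1}=\diag(A_{11},A_{22})$. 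In these coordinates the right invariant subspace of $A_{11}$ is spanned by $\twoone{I_k}{0}$ and the left invariant subspace of $A_{11}$ is spanned by $\twoone{I_k}{0}$ as well (since the matrix is block-diagonal). Pulling back through $Z_e$ and $Q$: the columns of $V$ (up to a $k\times k$ change of basis) are $Q Z_e^{-1}\twoone{I_k}{0} = Q\twoone{I_k}{0}$, while a left invariant subspace basis is obtained from the rows, i.e. from $Z_e Q^*$ restricted to the first $k$ rows, giving (again up to a $k\times k$ change of basis) a matrix proportional to $Q\twoone{I_k}{Z^*}$ after renormalization.

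The key computation is then $U^*V$. If $\tilde U := Q\twoone{I_k}{Z^*}$ and $\tilde V := Q\twoone{I_k}{0}$ are (non-orthonormal) bases of $\lcs$ and $\cs$ respectively, then $\tilde U^*\tilde V = \rowvett{I_k}{Z} Q^*Q \twoone{I_k}{0} = I_k$. The passage to the orthonormal bases $U$ and $V$ required in the statement costs two $k\times k$ factors: $V=\tilde V R_V$ and $U = \tilde U R_U$ for upper-triangular $R_V, R_U$ coming from QR factorizations, so $U^*V = R_U^* (\tilde U^*\tilde V) R_V = R_U^* R_V$, whence $(U^*V)^{-1}=R_V^{-1}R_U^{-*}$ and $\norm{(U^*V)^{-1}}\leq \norm{R_V^{-1}}\,\norm{R_U^{-1}}$. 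Now $\norm{R_V^{-1}} = \norm{\tilde V(\tilde V^*\tilde V)^{-1/2}\cdot\text{(something)}}$; more directly, since $\tilde V = Q\twoone{I_k}{0}$ already has orthonormal columns, $R_V$ is unitary and contributes nothing. For $R_U$: $\tilde U^*\tilde U = I_k + ZZ^*$, so $\norm{R_U} = \norm{(I_k+ZZ^*)^{1/2}} \leq (1+\norm{Z}^2)^{1/2}$ and, crucially, $\norm{R_U^{-1}} = \norm{(I_k+ZZ^*)^{-1/2}} \leq 1$. That would give $\norm{(U^*V)^{-1}}\leq 1$, which is too strong — so the asymmetry must be handled more carefully: $V$ being exactly $Q\twoone{I_k}{0}$ is a coordinate artifact, and in the original coordinates $V$ is a genuine orthonormal basis of $\cs$ while $\tilde V$ above is its image, which need not be orthonormal after we also used $Z_e$ on the $\ncs$ side. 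The honest bookkeeping is: $V$ orthonormal spans $\cs = Q\,\sspan\twoone{I_k}{0}$, so indeed $V = Q\twoone{I_k}{0}\Theta$ for unitary $\Theta$, contributing nothing; but $U$ orthonormal spans $\lcs$, and $\lcs$ in the original coordinates is $Q Z_e^* \sspan\twoone{I_k}{0} = Q\,\sspan\twoone{I_k}{Z^*}$, so $U = Q\twoone{I_k}{Z^*}(I_k+ZZ^*)^{-1/2}\Psi$ for unitary $\Psi$. Then $U^*V = \Psi^*(I_k+ZZ^*)^{-1/2}\rowvett{I_k}{Z}Q^*Q\twoone{I_k}{0}\Theta = \Psi^*(I_k+ZZ^*)^{-1/2}\Theta$, giving $\norm{(U^*V)^{-1}} = \norm{(I_k+ZZ^*)^{1/2}} \leq (1+\norm{Z}^2)^{1/2} \leq (1 + \norm{\H}^2/\sep(\cs)^2)^{1/2} \leq 1 + \norm{\H}/\sep(\cs) = 1 + \relsep(\cs)^{-1}$, which is exactly the claimed bound with $C=D=1$ (or slightly larger constants if one prefers a cleaner intermediate estimate, e.g. absorbing the factor in $\norm{A_{12}}\leq\norm{\H}$).

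The main obstacle is the precise tracking of which side ($\cs$ or $\lcs$) picks up the ill-conditioned factor $Z_e$ and ensuring no spurious cancellation or double-counting: one must be careful that the block-diagonalizing similarity acts on $\H$ as $Z_e Q^* \H Q Z_e^{-1}$, so right invariant subspaces transform by $Q Z_e^{-1}$ and left invariant subspaces (row spaces) by $Z_e Q^*$, i.e. by the \emph{inverse-transpose} $Q Z_e^*$. Since $Z_e^{-1} = \twotwo{I}{-Z}{0}{I}$, the first-block right invariant subspace stays $\sspan\twoone{I_k}{0}$ (no $Z$), while the first-block left invariant subspace becomes $\sspan\twoone{I_k}{Z^*}$ (picks up $Z$); this is the correct asymmetry and it is what makes $\norm{(U^*V)^{-1}}$, rather than $\norm{U^*V}$, grow. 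A secondary point is that the theorem statement uses $\relsep(\cs)$ defined via $\norm{A}$ in \eqref{defA11A12} with the \emph{two}-norm or Frobenius norm; one should state at the outset which norm is used and note that $\norm{A_{12}}\leq\norm{Q^*\H Q}=\norm{\H}$ holds in both, so the chain of inequalities above goes through verbatim, yielding the bound with moderate absolute constants $C$ and $D$ independent of $\H$.
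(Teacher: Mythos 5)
Your proposal is correct and follows essentially the same route as the paper's proof: an orthonormal change of basis making $V=\left[\begin{smallmatrix}I\\0\end{smallmatrix}\right]$, the block-diagonalizing matrix $Z$ from Lemma~\ref{miscellany} with $\norm{Z}\leq\relsep(\cs)^{-1}$, the observation that $(I+ZZ^*)^{-1/2}\rowvett{I}{\pm Z}$ is an orthonormal basis of $\lcs$, and the identity $\norm{(U^*V)^{-1}}=\norm{(I+ZZ^*)^{1/2}}$. The only differences are cosmetic (the self-correcting detour in the middle, and a sign on $Z$ that is immaterial for the norm), so the argument matches the paper's.
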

\begin{proof}
Perform an orthonormal change of basis so that
$V=\begin{bmatrix}I\\0\end{bmatrix}$ and partition
\[
 \mathcal{H}=\begin{bmatrix}A_{11} & A_{12} \\ 0 & A_{22}\end{bmatrix}.
\]
Let $Z$ as in item~\ref{diagz} of Theorem~\ref{miscellany}; in particular we have $\norm{Z}\leq \relsep(\cs)^{-1}$.
Notice that $(I+ZZ^*)^{-1/2}\begin{bmatrix}I & -Z\end{bmatrix}$ is
another orthonormal basis of $\lcs$,
thus $U^*=Q(I+ZZ^*)^{-1/2}\begin{bmatrix}I & -Z\end{bmatrix}$ for
a suitable unitary $Q$. We have then
\[
\norm{(U^*V)^{-1}}=\norm{(I+ZZ^*)^{1/2}}=(\norm{I}^2+\norm{Z}^2)^{1/2},
\]
where the last equality can be established as in the proof of Lemma~\ref{svdtrick}.
\end{proof}


\subsection{Conditioning of the shifted problem}
While not a formal proof, the following argument can be used to gauge the conditioning of the shifted problem. Apply an orthogonal change of basis to bring $\H$ in the form
\[
 \begin{bmatrix}
  V_a & G_a & * & *\\
  & \bar{V}_a & * & *\\
 & & V_s & G_s\\
 & & & \bar{V}_s
 \end{bmatrix},
\]
where $\sigma(V_s)$ and $\sigma(V_a)$ contain respectively the stable and antistable eigenvalues associated with $\cs$, and $\sigma(\bar{V}_s)$, $\sigma(\bar{V}_a)$ the stable and antistable eigenvalues associated with $\ncs$. We may find a matrix $R_a$ such that
\[
 R_a\twotwo{V_a}{G_a}{0}{\bar{V}_a}R_a^{-1}=\diag(V_a,\bar{V}_a),
\]
and $\kappa(R_a)=O\Bigl(\frac{\norm{G_a}^2}{\sep(V_a,\bar{V}_a)^2}\Bigr)$. If $\gap(V_a,\bar{V_a})$ is a good approximation of the corresponding $\sep$, or if $\cs$ and $\ncs$ are well separated also in the $\sep$ sense, we expect this condition numbers to be moderate. We argue similarly for the blocks indexed by $s$, and construct a matrix $R_s$ which annihilates $G_s$. We let $R:=\diag(R_a,R_s)$.

Using the special block structure of $R\H R^{-1}$, we may construct the matrices $U$ and $V$ needed for the shift technique as
\begin{align*}
 V=&R^{-1}
\begin{bmatrix}
I & *\\
0 & *\\
0 & I\\
0 & 0
\end{bmatrix},&
U=&R
\begin{bmatrix}
 I & 0\\
0 & 0\\
* & I\\
* & 0
\end{bmatrix},
\end{align*}
with $U^*V=I$. Therefore $\wh{\H}$ takes the form
\[
 \wh{\H}=R^{-1}\begin{bmatrix}
          (s+1)V_a & 0 & * & *\\
           0 & \bar{V}_a & * & *\\
           0 & 0 & (s+1)V_s & 0\\
           0 & 0 & 0 & \bar{V}_s
         \end{bmatrix}R,
\]
where the entries marked with an asterisk might be affine functions of $s$. Thanks to Lemma~\ref{miscellany}, the separation of the anti-stable subspace in $\wh{\H}$ is
\begin{equation}\label{quibariamo}
 \wh{\sep\mathcal{\sss}} \geq \frac{\min\left(\sep((s+1)V_a,(s+1)V_s),\sep((s+1)V_a,\bar{V}_s),\sep(\bar{V}_a,(s+1)V_s),\sep(\bar{V}_a,\bar{V}_s)\right)}{\kappa(R_a)\kappa(R_s)}.
\end{equation}
If $s$ is moderate and good separation properties hold among $\cs$ and $\ncs$, and between the stable and unstable part of $\cs$, then we may expect, in analogy with the gap setting, that this minimum is attained by the first element, i.e., $\wh{\sep\sss}\geq (s+1)\frac{1}{\kappa(R_a)\kappa(R_s)}\sep \sss$. This means that, up to factors that depends on the separation of the central subspace $\sep{\cs}$, the conditioning of the shifted problem is improved by a factor $s+1$.

Unfortunately, a full proof of the fact that the minimum in \eqref{quibariamo} is attained by the critical subspaces seems elusive. As far as we know, there are no results in literature concerning the behavior of the separation when a scaling is applied to one of the two matrices in a way that takes the eigenvalues ``more far apart'' (in some suitable setting). The analogy with the gap setting and the experimental results in Section~\ref{sec:numa} seem to back up our claim.


\subsection{Stability under perturbations of the computed central subspace}
With a similar reasoning, we may try to estimate the impact of errors in the computed bases for the left and right central subspaces on the accuracy of the solution. We choose a basis as in Theorem~\ref{conduv}. If we use perturbed
versions of $U^*$ and $V$ as
\begin{align*}
 V=&\begin{bmatrix}I+E_1\\E_2\end{bmatrix} & U^*=\begin{bmatrix}I+F_1 & -Z+F_2\end{bmatrix},
\end{align*}
we obtain instead of $\wh{\H}$
\[
 \widetilde{{\mathcal{H}}}
=
\left[\begin{smallmatrix}(s+1)H_{11}+s(E_1H_{11}+H_{11}F_1+E_{1}H_{11}F_{1}) &
H_{12}-sH_{11}Z+s(-E_1H_{11}Z+H_{11}F_2+E_1H_{11}F_2)
\\ sE_2H_{11}(I+F_1) &H_{22}+sE_2H_{11}(-Z+F_2)\end{smallmatrix}\right].
\]
If $\norm{E_i},\norm{F_i}\leq \varepsilon$, then
$\norm{\widehat{\mathcal{H}}-\widetilde{{\mathcal{H}}}}= K s\varepsilon\norm{Z}\norm{\H}$ for a moderate constant $K$.
From the reasoning in the previous section, we expect $\wh{\relsep\sss}^{-1}\leq \frac{{\kappa(R_a)\kappa(R_s)}}{s+1}\relsep\sss^{-1}$, thus by \eqref{relsepbound} the computed subspace $\widetilde{\sss}$ satisfies
\[
 \dist(\sss,\widetilde{\sss})\leq C\frac{Ks\varepsilon\norm{Z}}{\widehat{\relsep{\sss}}}=\frac{s}{s+1} \frac{CK\kappa(R_a)\kappa(R_s)\norm{Z}\varepsilon}{\relsep{\sss}}.
\]
The factor $\frac{s}{s+1}$ is bounded by 1, and
this bounds differs from the perturbation bound \eqref{relsepbound}
for the nonshifted problem only by factors depending on $\relsep{\cs}$.


\section{The SuShi (Subspace Shift) algorithm}\label{sec:alg}

We assume that the space $\cs$  corresponding to the smallest in modulus eigenvalues of $\H$ verifies the assumptions stated at the begin of Section \ref{sec:sushitech}. We call $\cs$ the {\em central subspace} of $\H$.

In this case, the subspace shift technique of Section \ref{sec:sushitech} can be
easily translated into a numerical algorithm for solving close-to-critical
M-NARE \eqref{eq:NARE}. 
We call it SuShi
(Subspace Shift) algorithm.

\begin{algorithm}
 \caption{SuShi algorithm for the
solution of a close-to-critical M-NARE}\label{algo:ss1}
\begin{algorithmic}[1]
 \STATE choose $k$
 \STATE compute two matrices $U,V$ with orthogonal columns which span the left and right invariant
 subspaces of $\mathcal{H}$ corresponding to its $k$ smallest in modulus eigenvalues,  $\xi_1,\ldots,\xi_k$,
 respectively
 \STATE choose $s>0$ and compute $\wh{\mathcal{H}}=\mathcal{H}(I+sV(U^*V)^{-1}U^*)$
 \STATE solve the NARE $\wh{\mathcal{R}}(X)=0$ associated with $\wh{\mathcal
 H}$, to get the minimal nonnegative solution $X_*$ of the original M-NARE $\mathcal R(X)=0$
\end{algorithmic}
\end{algorithm}

Algorithm 1 is very simple. Nevertheless, in order to
get a decent implementation, the details need to be tackled with
some care. One could ask how to dynamically determine the value of
$k$, how to compute $U$ and $V$, how to determine the value of $s$
and how to efficiently solve the ``shifted'' NARE $\wh {\mathcal R}(X)=0$.
This is the topic of the next sections.


\subsection{Computation of the central invariant subspaces}

The central left and right invariant subspaces are the ones corresponding to the smallest in modulus eigenvalues of $\H$, then the inverse orthogonal iteration of \cite[Section 9.3.2]{golubvanloan} applied to $\H$ and $\H^*$, respectively, converges to these subspaces. Notice that we assume that $\H$ is nonsingular so that the customary shift technique cannot be used.

The luckiest situation arises when $k=2$ and
the eigenvalues $\la_n$ and $\la_{n+1}$ of $\mathcal{H}$ are the smallest in modulus and well
separated from the others, as in the problem of \cite{juanglin}.

In the general case, setting $k=2$ may not yield the desired results since we could have close-to-critical settings
such that $\lambda_n$ and $\lambda_{n-1}$ are very close to each other
and to zero, and $\lambda_{n+1}$ slightly larger in modulus than both of them. Moreover, even if we shift away $\la_n$ and $\la_{n+1}$, the
remaining eigenvalues of $\H$ (e.g., $\la_{n-1}$ in a setting similar to the case
above) could be very close to them, and thus the  convergence speed of the Riccati solvers based on matrix iterations is almost unchanged; therefore, the subspace shift with $k=2$ can still be applied but is not much useful.

For these reasons, we would like to handle cases where $k$ may be larger than two, this means that there is some
eigenvalue different from $\la_n$ and $\la_{n+1}$ near to the
origin.
Our minimal assumption is that there exists a value of $k\ge 2$ such that the $k$ smallest eigenvalues of  $\H$ are near to each other (close-to-critical case) and well separated from the $(2n-k)$ largest eigenvalues (compare the assumptions of Section \ref{sec:sushitech}). 

If the value of $k$ is not known in advance it can be determined using the following strategy:
start the inverse
orthogonal iteration  with $k=2$, estimate its convergence speed after some steps. If the iteration shows
itself too slow, enlarge $k$ until the convergence becomes (possibly) fast. This approach yields together $V$ and $k$. The matrix $U$ can be
obtained applying a subspace iteration to $\mathcal{H}^*$. 


\subsection{Magnitude of the shift parameter}\label{sec:selshift}

Another issue which appears in the practical implementation is the
selection of the shift parameter $s$ in Algorithm 1, which should be automatic to deal with different problems. 

A major concern is that if the chosen value of $s$
is too small, then the two central eigenvalues do not move
significantly and the gap remains small; on the other hand, if the
shift parameter is excessively large then $\norm{\widehat{\mathcal{H}}}_F$
grows, and the conditioning of the shifted Riccati equation
degrades, according to the results of \ref{sec:gapsep}.

Let $\xi_1,\ldots,\xi_{m+n}$ be the eigenvalues of $\H$ ordered by nondecreasing modulus.
When the main objective is the acceleration of matrix iterations, like the SDA, the value of $s$ should be chosen such that the eigenvalues corresponding to the central subspaces, say $\xi_1,\ldots,\xi_k$, which are responsible of the slow convergence, gets far from the imaginary axis. 

We need to estimate how small they are with respect to the other eigenvalues.
We may get an estimate using the convergence speed of the inverse orthogonal iteration. Notice that, with our
assumptions on the eigenvalues, the convergence rate is determined by
\[
 t=\frac{|\xi_k|}{|\xi_{k+1}|},
\]
hence a rough estimate for $t$ is given by comparing successive
iterates of the subspace iteration. The value
$\xi_k$ (or $\xi_1$), for small $k$, can be easily computed, since it is the largest in modulus eigenvalue of the $k\times k$ matrix $V^*\mathcal{H} V$, an alternative approach is to approximate it using power methods or some steps of the Arnoldi iteration. 

Once an approximation of $t$ has been computed, if we choose $s$ such that $(1+s)\xi_1>\xi_{k+1}$, then all the eigenvalues  $(1+s)\xi_1,\ldots (1+s)\xi_k$ become larger in modulus
than $\delta$.


\subsection{Solution of the shifted equation}

A popular algorithm for computing the minimal solution of an M-NARE is the Structured Doubling Algorithm (SDA), which, in the formulation of
\cite{glx}, is a system of rational matrix iterations defined as
\begin{equation}\label{eq:SDA}
\begin{aligned}
E_{k+1}=&E_k(I-G_kH_k)^{-1}E_k,\\
F_{k+1}=&F_k(I-H_kG_k)^{-1}F_k,\\
G_{k+1}=&G_k+E_k(I-G_kH_k)^{-1}G_kF_k,\\
H_{k+1}=&H_k+F_k(I-H_kG_k)^{-1}H_kE_k,
\end{aligned}
\end{equation}
with suitable initial values $E_0\in \C^{n\times n}$, $F_0\in
\C^{m\times m}$, $G_0\in \C^{n\times m}$, $H_0\in \C^{m\times n}$.
We say that the SDA is applicable (for a set of initial values $E_0,F_0,G_0,H_0$) if the matrix $I-G_kH_k$ (or equivalently $I-H_kG_k$) is nonsingular for each $k$ otherwise we say that the SDA has a breakdown.

In the case of the M-NARE, choosing the initial values of the SDA  as
\begin{equation}\label{SDAini}
\begin{aligned}
E_0=&I-2\gamma V_{\gamma}^{-1}, &
F_0=&I-2\gamma W_{\gamma}^{-1}, \\
G_0=&2\gamma D_{\gamma}^{-1}CW_{\gamma}^{-1}, &
H_0=&2\gamma W_{\gamma}^{-1}BD_{\gamma}^{-1},\\
A_{\gamma}=&A+\g I,&
D_\g=&D+\g I,\\
W_{\gamma}=&A_{\gamma}-BD_{\gamma}^{-1}C, &
V_{\gamma}=&D_{\gamma}-CA_{\gamma}^{-1}B,
\end{aligned}
\end{equation}
for $\gamma\ge \gamma_*$, defined in \eqref{eq:gamma1}, yields well defined sequences such that $G_k\to X_*$ and $H_k\to Y_*$ where $X_*$ is the minimal nonnegative solution of the M-NARE, while
$Y_*$ is the minimal nonnegative solution of the {\em dual}
M-NARE: $YBY-YA-DY+C=0$. In the critical cases the convergence is linear, while in the other cases is quadratic with rate 
\begin{equation}\label{eq:conv}
\nu=\gap_{\mathcal C_\gamma}(\mathcal H)=\frac{\abs{\Cg(\lambda_n)}}{ \abs{\Cg(\lambda_{n+1})}}.
\end{equation}
Moreover, the value of $\gamma\ge
\gamma_*$ that yields faster convergence is $\gamma=\gamma_*$
 (see \cite{bimp} and the references therein).

The SDA can be applied with minor modifications to the equation associated with the subspace shifted matrix
\[
\wh{\mathcal H}=:\twotwo {\wh D}{-\wh C}{\wh B}{-\wh A}.
\]
It is enough to start the SDA with $E_0,F_0,G_0,H_0$ obtained using formulae \eqref{SDAini} with the new coefficients $\wh A,\wh B,\wh C,\wh D$ instead of $A,B,C,D$ and with the same $\gamma$.

The new matrix $\wh {\mathcal H}$ might not be an M-matrix so that the applicability should be assumed, in this case one can prove that the method converges to the same limit matrices with a rate which is
\begin{equation}\label{eq:convsh}
	\wh \nu=\frac{\max_{i=1,\ldots,n}|\Cg(\wh \la_i)|}{\min_{j=1,\ldots,m}|\Cg(\wh \la_{n+j})|},
\end{equation}
where $\wh {\la}_1,\ldots,\wh \la_{n+m}$ are the eigenvalues of $\wh {\mathcal H}$. The proof of convergence may be obtained with similar manipulations as the ones of \cite{glx,gim}, so we decided to omit it.

Using the same parameter $\gamma$ for both standard SDA and SDA applied to the shifted equation gives different convergence rates according to \eqref{eq:conv} and \eqref{eq:convsh}. An acceleration is obtained in the shifted case if $\wh \nu<\nu$. On the assumption that the central eigenvalues are near to the origin, there exists $s_{\max}$ such that $\wh \nu<\nu$ for any value of the shift parameter $0<s<s_{\max}$.


\section{Numerical experiments}\label{sec:numa}

We present some numerical examples showing the effectiveness of the subspace shift technique, through the algorithms presented in Section \ref{sec:alg}, in solving close-to-critical nonsingular M-NARE, when the assumptions of Section \ref{sec:sushitech} are fulfilled; that is, when the $k$ eigenvalues corresponding to the central subspaces are nonzero and well separated from the other eigenvalues of $\H$.  We recall that these assumptions can be identified dynamically by the algorithm.

We compare the SDA applied to the original equation and to the shifted one. We report the number of steps required by the SDA to converge and the number of steps of the inverse orthogonal iteration in the subspace shift algorithm. All steps of the SDA and the first step of the inverse orthogonal iteration are the most expensive part of the algorithms, since their asymptotic cost is cubic with respect to the size of the matrices; for instance, for $m=n$, the cost of a step of the SDA is $O(n^3)$ elementary arithmetic operations. 

We estimate  the accuracy of the computed solution $\wt X_*$ by means of the relative error
\[
\mbox{err}=\frac{\norm{\wt X_*-X_*}_F}{\norm{X_*}_F},
\]
where $X_*$, if available, is the exact solution or an approximation of it obtained using a higher precision. Otherwise, we use the relative residual
\[
    \mbox{res}=\frac{\norm{\mathcal{R}(\wt X_*)}_F}{\norm{\wt X_*C\wt X_*+B}_F+\norm{A\wt X_*+\wt X_*D}_F}.
\]
In our experiments the Frobenius norm is used. In the tests we use the value $10^{-15}$ as stopping tolerance, a lower tolerance has been used sometimes to monitor the error.

\begin{Test}\label{test:0} \rm
As a first test, we consider the close-to-critical cases of the
transport problem treated in \cite{guolaub, juanglin,bip}. It is
an M-NARE with square coefficients of size $n$ depending on
two parameters $0\le \alpha<1$ and $0<c\le 1$ (for the complete
definition and the meaning of the parameters see \cite{juanglin}).
The problem is critical for $(\alpha,c)=(0,1)$, and it is
close-to-critical if $\alpha$ and $c$ approach simultaneously $0$
and $1$, respectively. In this problem $k=2$ and the two eigenvalues $\la_n$ and $\la_{n+1}$ are well separated from the others.

For $n=4$ and certain values of $\beta$ such that $(\alpha,c)=(\beta,1-\beta)$, we compute the absolute gap of $\H$ ($\gap(\H)=|\la_n-\la_{n+1}|$), the relative sep of its stable subspace ($\mathrm{rsep}(\mathcal W)$, defined in Section \ref{sec:subsep}), the Cayley-transformed gap ($\gap_{\Cg}(\H)$, defined in \eqref{convspeed}) and the same quantities for the shifted matrix $\wh \H$, where we have chosen $\gamma=\gamma_*$ of \eqref{eq:gamma1} in both cases and $s=\xi_3/\xi_1-1$, where $\xi_i$ are the eigenvalues of $\H$ ordered by nondecreasing modulus. We have computed moreover the relative sep of the central subspace, indicated by $\mathrm{rsep}(\mathcal U)$.
The results are reported in Table \ref{tab:test0}.

Since the conditioning of an invariant subspace is proportional to the reciprocal of the relative sep, we observe that the central invariant subspace is much better conditioned than the stable subspace and that the conditioning of the shifted problem is not worse than the one of the original problem.

Recall that $\gap_{\Cg}(\mathcal H)$ and $\gap_{\Cg}(\wh {\mathcal H})$ are the parameters of quadratic convergence of the SDA in the nonshifted and the shifted case, respectively. If $\gap_{\Cg}(\cdot)$ is near to 1, we expect a large number of steps of SDA to obtain the desired accuracy. This suggest that the SDA applied to the shifted equation converge much faster, as shown in the next tests.

\begin{table}\label{tab:test0}
\begin{center}
\begin{tabular}{c|ccc|c|ccc}
$\beta$ & $\gap({\H})$ & $\mathrm{rsep}(\mathcal W)$ & $\gap_{\Cg}(\H)$ & $\mathrm{rsep}(\mathcal U)$  &  $\gap({\wh \H})$ & $\mathrm{rsep}(\wh{\mathcal W})$  & $\gap_{\Cg}(\wh {\H})$ \\ \hline %
$10^{-3}$ & %
$0.11$ & $4.5\cdot 10^{-3}$ & $0.98$ & %
$2.9\cdot 10^{-2}$ & %
$2.5$ & $2.3\cdot 10^{-2}$ & $0.69$ 
\\%
$10^{-6}$ & %
$3.5\cdot 10^{-3}$ & $1.4\cdot 10^{-4}$ & $0.9995$ & %
$2.9\cdot 10^{-2}$ & %
$2.5$ & $7.1\cdot 10^{-4}$ & $0.69$ 
\\%
$10^{-12}$ & %
$3.5\cdot 10^{-6}$ & $1.4\cdot 10^{-7}$ & $\approx 1$ & %
$2.9\cdot 10^{-2}$ & %
$2.5$ & $7.1\cdot 10^{-7}$ & $0.69$ %
\end{tabular}
\caption{Several separation measures for the transport problem with $n=4$}
\end{center}
\end{table}

\end{Test}

\begin{Test}\label{test:1}\rm
We consider the transport problem of Test \ref{test:0}, to which the subspace shift algorithm is applied, where the Riccati equations are solved with the SDA. 

In Table \ref{tab:test1} we give the number of SDA iterations needed to get the best
relative residual for different matrix sizes $n$ and choices of the
parameters $\beta$ (in a stand-alone implementation the number of iterations may be slightly larger due to a non optimal stopping criterion). We provide in parentheses the number of orthogonal iterations needed to approximate the central invariant subspace in the shifted case, where the shift parameter is chosen with the approximation strategy of Section \ref{sec:selshift}.

As
$\beta$ approaches zero, the problem becomes close-to-critical; in
fact $\beta$ is strictly related to the relative gap. 
The table reports also $\gap({\mathcal H})$ and the minimum distance $\delta$ from the two  eigenvalues $\la_n$ and $\la_{n+1}$ to the other eigenvalues of $\H$. 

As one can see the problem
is well suited to be solved by our algorithms since the central
eigenvalues are well separated from the others and $\delta$ is always large enough while the $\gap$ goes to zero; this shows that this example fits adequately the assumptions of Section \ref{sec:sushitech}.

\end{Test}

\begin{table}
\begin{center}
\begin{tabular}{cccc|cc|cc}
$n$ & $\beta$ & $\gap({\mathcal H})$ & $\delta$ & SDA its & SDA res & Alg~1 its & Alg~1 res\\
\hline 32 & $10^{-3}$ & $0.11$ & 0.96 &
 15 & $8.8\cdot 10^{-15}$ & $11\ (12)$ & $4.2\cdot 10^{-16}$\\
32 & $10^{-6}$ & $3.5\cdot 10^{-3}$ & 1.0 &
 20 & $1.0\cdot 10^{-14}$ & $11\ (6)$ & $1.1\cdot 10^{-16}$\\
32 & $10^{-12}$ & $3.5\cdot 10^{-6}$ & 1.0 &
 27 & $8.1\cdot 10^{-15}$ & $11\ (3)$ & $1.1\cdot 10^{-16}$\\
128 & $10^{-3}$ & $0.11$ & 0.95 &
 17 & $1.2\cdot 10^{-13}$ & $13\ (12)$ & $7.7\cdot 10^{-15}$\\
128 & $10^{-6}$ & $3.5\cdot 10^{-3}$ & 1.0 &
 21 & $8.0\cdot 10^{-13}$ & $13\ (6)$ & $3.6\cdot 10^{-16}$\\
128 & $10^{-12}$ & $3.5\cdot 10^{-6}$ & 1.0 &
 30 & $1.5\cdot 10^{-13}$ & $12\ (4)$ & $2.7\cdot 10^{-16}$\\
\end{tabular}
\caption{Number of iterations for Algorithm~1 vs. SDA on the
transport problem}\label{tab:test1}
\end{center}
\end{table}

\begin{Test}\label{test:2} \rm

The second example is taken from \cite{guo01}. The matrix $\mathcal M$ is a random M-matrix of size $n$ and depending on a parameter $\alpha$. As $\alpha$ tends to 0, the matrix $\mathcal M$ tends to a singular matrix. The problem is not close-to-critical, however, there are two central eigenvalues well separated from the others, so the subspace shift algorithm works fine. 

In Table \ref{tab:test2} we report the results for this problem.
The effectiveness of the subspace shift algorithm suggests the possibility to use it in particular problems where a fistful of small eigenvalues are well separated from the others.

\begin{table}
\begin{center}
\begin{tabular}{cccc|cc|cc}
$n$ & $\alpha$ & $\gap({\mathcal H})$ & $\delta$ & SDA its & SDA res & Alg~1 its & Alg~1 res\\
\hline %
50 & $10^{-3}$ & $0.43$ & 43 &
 12 & $3.9\cdot 10^{-16}$ & $4\ (7)$ & $1.1\cdot 10^{-16}$\\
100 & $10^{-3}$ & $0.61$ & 90 &
 13 & $6.1\cdot 10^{-16}$ & $4\ (7)$ & $1.9\cdot 10^{-16}$\\
32 & $10^{-12}$ & $1.1$ & 185 &
 13 & $8.6\cdot 10^{-16}$ & $4\ (7)$ & $3.3\cdot 10^{-16}$\\
\end{tabular}
\caption{Number of iterations for Algorithm~1 vs. SDA on the problem of Test \ref{test:2}}\label{tab:test2}
\end{center}
\end{table}

\end{Test}

\begin{Test}\label{test:3} \rm

The residual is not always a good measure of the accuracy of the solution of a matrix equation. To test the accuracy of the subspace shift algorithm we consider the problems of Test \ref{test:0} and Test \ref{test:2} and compute the solution with double precision to get a solution $X_*$ exact up to 8 significant digits.
Then we run the customary SDA and the SuShi algorithm with precision $10^{-8}$ (single precision). 

The relative error is essentially the same in both cases. For instance, for Test \ref{test:0} with $n=4$ and $\alpha=10^{-3}$ we get for both errors $1.8\cdot 10^{-7}$, for Test \ref{test:2} with $n=100$ and $\alpha=10^{-3}$ we get for both errors $1.4\cdot 10^{-7}$.

\end{Test}


\section{Conclusions}\label{sec:conc}
We have provided a generalization of the shift technique which is aimed to handle close-to-critical nonsymmetric algebraic Riccati equations. The technique consists in computing explicitly the (hopefully moderate-sized) invariant subspace relative to the smallest eigenvalues, which are responsible for the slow convergence of the solution algorithms, and modifying the problem in order to remove them. A theoretical analysis is outlined, not only in terms of eigenvalue location, but also using the more powerful separation metric, which is the one related to the conditioning of the problem; numerical experiments are presented and prove that the application of the shift technique is effective on the analyzed problems.


\section*{Acknowledgments}
We are thankful to an anonymous referee for his/her observations that helped us direct our efforts during the completion of this article.
Some of the results presented here have been obtained while the second author, F.~Poloni, was supported by a doctoral grant of Scuola Normale Superiore, Pisa, Italy.

\bibliographystyle{abbrv}
\bibliography{wcs}
\end{document}